\newcommand{\R}{\mathbb{R}}
\newcommand{\N}{\mathbb{N}}
\newcommand{\hd}{\dim_{\textup{H}}}
\newcommand{\bd}{\dim_{\textup{B}}}
\newcommand{\ubd}{\overline{\dim}_{\textup{B}}}
\newcommand{\lbd}{\underline{\dim}_{\textup{B}}}
\newcommand{\uid}{\overline{\dim}_{\,\theta}}
\newcommand{\lid}{\underline{\dim}_{\,\theta}}
\newcommand{\be}{\begin{equation}}
\newcommand{\ee}{\end{equation}}
\DeclareMathOperator*\lowlim{\liminf}
\DeclareMathOperator*\uplim{\limsup}
\DeclarePairedDelimiter{\ceil}{\lceil}{\rceil}
\newtheorem{theorem}{Theorem}[section]
\newtheorem{lemma}[theorem]{Lemma}
\newtheorem{cor}[theorem]{Corollary}
\newtheorem{prop}[theorem]{Proposition}
\theoremstyle{definition}
\theoremstyle{remark}
\newcommand\blfootnote[1]{%
  \begingroup
  \renewcommand\thefootnote{}\footnote{#1}%
  \addtocounter{footnote}{-1}%
  \endgroup
}
\numberwithin{equation}{section}
\begin{document}
\title[Projection Theorems for Intermediate Dimensions]{Projection Theorems for Intermediate Dimensions}
\author{S. A. Burrell}
\address{S. A. Burrell, School of Mathematics and Statistics, University of St Andrews, St Andrews, KY16 9SS, United Kingdom.}
\email{sb235@st-andrews.ac.uk}
\author{K. J. Falconer}
\address{K. J. Falconer, School of Mathematics and Statistics, University of St Andrews, St Andrews, KY16 9SS, United Kingdom.}
\email{kjf@st-andrews.ac.uk}
\author{J. M. Fraser}
\address{J. M. Fraser, School of Mathematics and Statistics, University of St Andrews, St Andrews, KY16 9SS, United Kingdom.}
\email{jmf32@st-andrews.ac.uk}

\subjclass[2010]{Primary: 28A80}
\keywords{Intermediate dimensions, Marstrand Theorem, Projections, Capacity}
\date{\today}
\dedicatory{To the memory of John Marstrand 1926-2019}

\begin{abstract}
Intermediate dimensions were recently introduced to interpolate between the Hausdorff and box-counting  dimensions of fractals. Firstly, we show that these intermediate dimensions may be defined in terms of capacities with respect to certain kernels. Then, relying on this, we show that the intermediate dimensions of the projection of a set $E \subset \R^n$ onto almost all $m$-dimensional subspaces depend  only on $m$ and $E$, that is, they are almost surely independent of the choice of subspace. Our approach is based on   `intermediate dimension profiles' that are expressed in terms of capacities.  We discuss several applications at the end of the paper, including a surprising result that relates the  \emph{box} dimensions of the projections of a set to the \emph{Hausdorff} dimension of the  set.
\end{abstract}

\maketitle
\blfootnote{First published in: S. A. Burrell, K. J. Falconer, J. M. Fraser, Projection theorems for intermediate dimensions. \\
\hphantom{0in}J. Fractal Geom. 8 (2021), 95-116. doi: 10.4171/JFG/99. \textsuperscript{\textcopyright} European Mathematical Society.}
\section{Introduction}
Theorems on dimensions of projections of fractals in Euclidean space have a long history. In 1954 Marstrand \cite{mar:1954} proved that the Hausdorff dimension of the orthogonal projections of a Borel set $E \subset \R^2$ onto linear subspaces was almost-surely constant. More specifically,
\begin{equation*}
\hd \pi_V E = \min\{\hd E, 1\},
\end{equation*}
for almost all one-dimensional subspaces $V$, where $\pi_V$ denotes orthogonal projection onto $V$. Kaufman gave a potential-theoretic proof of Marstrand's results \cite{kau:1968}, and in 1975 Mattila extended them to Borel sets $E  \subset \R^n$ and almost all $V$ in the Grassmannian $G(n, m)$ \cite{mat:1975}.  These seminal results set in motion a sustained interest  in the behaviour of dimension under projections, see \cite{Falconer,mat:book} for basic expositions and \cite{fafaji:2015,mat:2014,shmer:2015} for recent surveys.

It is natural to seek projection results for the various other dimensions that occur throughout fractal geometry. For example, in 1997 Falconer and Howroyd showed that the upper and lower box-counting dimensions of the projections of a set are almost surely constant and given by what they termed a `dimension profile' \cite{faho:1997,ho:2001}, reflecting how a set in $\R^n$ appears when viewed from an $m$-dimensional perspective. The dimension profiles were, however, implicitly defined and somewhat awkward to work with, leading to a recent re-working of the theory using a potential-theoretic approach \cite{fal:2018, fal:2019} where box-counting  dimensions are defined in terms of capacities, which are then used to study projections. 

Recently,  Falconer, Fraser and Kempton \cite{fafrke:2018} introduced intermediate dimensions to provide a continuum of dimensions, one for each $\theta \in [0, 1]$, that interpolate between the  Hausdorff dimension (obtained when $\theta = 0$) and box-counting dimensions ($\theta = 1$). These dimensions are defined by restricting the diameters of sets used in admissible coverings of $E$ to a range $[r,r^\theta]$ for small $r$.  A general discussion of this and other forms of dimension interpolation may be found in the recent survey \cite{fra:survey:2019}. 

In this paper, potential-theoretic methods are used to study intermediate dimensions, first to give a definition of these dimensions in terms of capacities with respect to certain kernels and then to prove a Marstrand-type theorem to give the almost sure intermediate dimensions of projections of sets in terms of capacities, see Theorem \ref{main}. Some examples and applications are given in the final section.

\section{Intermediate dimensions}\label{setting}
Intermediate dimensions were introduced by Falconer, Fraser and Kempton in \cite{fafrke:2018} to interpolate between Hausdorff dimension and box-counting dimensions. The lower and upper intermediate dimensions, $\lid E$ and  $\uid E$, of a set $E\subset\R^n$ depend on a parameter $\theta\in [0,1]$, with $\underline{\dim}_0 E = \overline{\dim}_0 E = \hd E$ and $\underline{\dim}_1 E = \lbd E$ and $\overline{\dim}_1 E = \ubd E$ where $\hd, \lbd$ and $\ubd$ denote Hausdorff, lower box- and upper box-counting dimension, respectively. Various properties of intermediate dimensions are established in \cite{fafrke:2018}. In particular $\lid E$ and $\uid E$ are monotonically increasing in $\theta\in [0,1]$, are continuous except perhaps at $\theta = 0$, and are invariant under bi-Lipschitz mappings. Intermediate dimensions are of interest for sets which have differing Hausdorff and box-counting dimensions, such as sequence sets of the form $\{0\} \cup  \{n^{-p} : n=1, 2, \dots\}$ for $p>0$, self-affine carpets and many other examples, with the intermediate dimensions reflecting the range of diameters of sets needed to get coverings that are efficient for estimating dimensions, see \cite{fafrke:2018}.

Specifically, for $E \subset \R^n$ and $0 < \theta \leq 1$, the  {\em lower intermediate dimension} of $E$ may be defined as
\begin{align}\label{lid}
\lid E =  \inf \big\{& s\geq 0  :  \mbox{ \rm for all $\epsilon >0$ and all $r_0>0$, there exists }  \nonumber\\
&\mbox{ $0<r\leq r_0$ and a cover $ \{U_i\} $ of $E$  such that} \\
 & \mbox{ $r^{1/\theta} \leq  |U_i| \leq r $ and 
 $\sum |U_i|^s \leq \epsilon$}  \big\}\nonumber
\end{align}
and the corresponding {\em upper intermediate dimension} by
\begin{align}\label{uid}
\uid E =  \inf \big\{& s\geq 0  :  \mbox{ \rm for all $\epsilon >0$, there exists $r_0>0$ such that} \nonumber\\
& \mbox{for all $0<r\leq r_0$, there is a cover $ \{U_i\} $ of $E$} \\
&\mbox{such that $r^{1/\theta} \leq  |U_i| \leq r$ and 
$\sum |U_i|^s \leq \epsilon$}  \big\},\nonumber
\end{align}
where $|U|$ denotes the diameter of a set $U \subset \R^n$. When $\theta = 0$ we take \eqref{lid} and \eqref{uid}
with no lower bounds  on the diameters of covering sets, recovering the Hausdorff dimension in both cases. When $\theta=1$ all covering sets are forced to have the same diameter and we recover the lower and upper box-counting dimensions, respectively.

For our purposes it is convenient to work with equivalent definitions of these intermediate dimensions in terms of limits of logarithms of sums over covers.
For bounded and non-empty $E \subset \R^n$, $\theta  \in (0, 1]$ and $s\in [0,n]$, define
\begin{align}\label{sums}
S_{r, \theta}^s(E) := \inf \Big\{& \sum_i |U_i|^s :\mbox{ \rm $\{U_i\}_i$\textnormal{ is a cover of} $E$ \textnormal{ such that }}\nonumber\\ 
& \mbox{ $r \leq |U_i| \leq r^\theta$\,\,\textnormal{ for all } $i$} \Big\}.
\end{align}
We claim
\be\label{altlid}
\lid E =  \bigg(\textnormal{ the unique } s\in [0,n] \textnormal{ such that  } \liminf\limits_{r \rightarrow 0} \frac{\log S_{r, \theta}^s(E)}{-\log r} =0\bigg)
\ee
and 
\be\label{altuid}
\uid E =  \bigg(\textnormal{ the unique } s\in [0,n] \textnormal{ such that  } \limsup\limits_{r \rightarrow 0} \frac{\log S_{r, \theta}^s(E)}{-\log r} =0\bigg).
\ee
 It is easy to see from \eqref{lid} and \eqref{uid} that  $\lid E$ and $\uid E$ are the infima of $s$ for which these lower and upper limits equal 0; that there are unique such values follows from the following lemma.

\begin{lemma}\label{declem}
Let $\theta \in (0, 1]$ and $E \subset \R^n$. For each $0<r<1$,
\be\label{lip}
-(s - t) \leq \frac{\log S_{r, \theta}^s(E)}{-\log r} - \frac{\log S_{r, \theta}^t(E)}{-\log r} \leq -\theta(s- t)
\qquad (0\leq t \leq s \leq n).
\ee
Moreover, there is a unique $s\in [0,n]$ such that $\liminf\limits_{r \rightarrow 0} \frac{\log S_{r, \theta}^s(E)}{-\log r}=0$ and a unique $s\in [0,n]$ such that $\limsup\limits_{r \rightarrow 0} \frac{\log S_{r, \theta}^s(E)}{-\log r}=0.$
\end{lemma}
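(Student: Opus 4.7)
The plan is to derive \eqref{lip} from a pointwise comparison on admissible covers, then deduce uniqueness from the resulting Lipschitz and strict monotonicity properties together with the endpoint values at $s=0$ and $s=n$.

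For \eqref{lip}, I would fix any admissible cover $\{U_i\}$ of $E$ with $r \leq |U_i| \leq r^\theta$ and, for $0 \leq t \leq s \leq n$, factor $|U_i|^s = |U_i|^t\,|U_i|^{s-t}$. Since $0<r<1$ and $s-t \geq 0$, the diameter constraint gives $r^{s-t} \leq |U_i|^{s-t} \leq r^{\theta(s-t)}$, so summing over $i$ and then taking the infimum over admissible covers on each side yields
\begin{equation*}
r^{s-t}\, S_{r,\theta}^t(E) \;\leq\; S_{r,\theta}^s(E) \;\leq\; r^{\theta(s-t)}\, S_{r,\theta}^t(E).
\end{equation*}
Taking logarithms and dividing by $-\log r > 0$ gives \eqref{lip} at once.

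Next, \eqref{lip} shows that $s \mapsto \frac{\log S_{r,\theta}^s(E)}{-\log r}$ is Lipschitz on $[0,n]$ with slope in $[-1,-\theta]$, uniformly in $r \in (0,1)$, and since $\theta > 0$ it is strictly decreasing. These two properties pass to the $\liminf$ and $\limsup$ as $r \to 0$, so that both
\begin{equation*}
s \mapsto \liminf_{r \to 0}\frac{\log S_{r,\theta}^s(E)}{-\log r} \qquad \text{and} \qquad s \mapsto \limsup_{r \to 0}\frac{\log S_{r,\theta}^s(E)}{-\log r}
\end{equation*}
are continuous and strictly decreasing functions of $s$ on $[0,n]$.

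To finish, I would check the endpoint values. At $s=0$ the sum $\sum |U_i|^0$ merely counts the covering sets, so $S_{r,\theta}^0(E) \geq 1$ (as $E$ is non-empty) and the ratio is non-negative. At $s=n$, boundedness of $E$ permits a cover by $O(r^{-\theta n})$ sets of diameter $r^\theta$, whence $S_{r,\theta}^n(E)$ is bounded above by a constant depending only on $E$, and the ratio tends to $0$ as $r \to 0$. Continuity together with strict monotonicity then provides exactly one $s \in [0,n]$ where each of the two functions vanishes. No step presents a serious obstacle; the entire argument is driven by the factorisation $|U_i|^s = |U_i|^t|U_i|^{s-t}$, which converts the diameter window $[r, r^\theta]$ into the two-sided bound in \eqref{lip}.
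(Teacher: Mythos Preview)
Your argument is correct and follows essentially the same route as the paper: the factorisation $|U_i|^s=|U_i|^t|U_i|^{s-t}$ together with the diameter window gives the two-sided bound on $S_{r,\theta}^s(E)$, and the resulting strict monotonicity plus continuity combined with the endpoint estimates at $s=0$ and $s=n$ yields uniqueness. One small wording point: from $S_{r,\theta}^n(E)\leq C$ you only get that the $\limsup$ (and $\liminf$) of the ratio is $\leq 0$, not that the ratio tends to $0$; this weaker conclusion is exactly what the intermediate value argument needs, so the proof stands.
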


\begin{proof}
For a cover $\{U_i\}$ of $E$ satisfying $r \leq |U_i| \leq r^\theta$ and $0\leq t \leq s \leq n$, 
$$
\sum_i |U_i|^t r^{s - t} \leq \sum_i |U_i|^s \leq \sum_i |U_i|^t r^{\theta(s - t)}.
$$
Taking infima over all such covers yields
$$
r^{s - t}S_{r, \theta}^t(E)  \leq  S_{r, \theta}^s(E) \leq r^{\theta(s - t)}S_{r, \theta}^t(E),
$$
from which \eqref{lip} follows. These inequalities carry over on taking lower limits of the quotients so in particular $$\liminf\limits_{r \rightarrow 0} \frac{\log S_{r, \theta}^s(E)}{-\log r}$$ is strictly monotonic decreasing and continuous for $s\in [0,n]$. 
Since $S_{r, \theta}^0(E)$ is bounded below by the box-counting number of $E$ at scale $r^\theta$, it follows that 
\[
\liminf\limits_{r \rightarrow 0} \frac{\log S_{r, \theta}^0(E)}{-\log r} \geq \theta \, \lbd E\geq 0.
\]
 Also $S_{r, \theta}^n(E)$ is bounded above by the $n$-dimensional volume of a ball containing $E$ so
\[
\liminf\limits_{r \rightarrow 0} \frac{\log S_{r, \theta}^n(E)}{-\log r}\leq 0. 
\]
 Continuity now gives  a unique $s\in [0,n]$ such that $\liminf\limits_{r \rightarrow 0} \frac{\log S_{r, \theta}^s(E)}{-\log r}=0$. A similar argument holds for upper limits.
\end{proof}

In Section \ref{secapdim} we will show how $\lid E$ and $\uid E$ can be represented in terms of capacities of $E\subset\R^n$ with respect to certain kernels. Then in Section \ref{projsec} we will show that by changing a parameter in the kernels we obtain the intermediate dimensions of the orthogonal projections of $E$ onto almost all $m$-dimensional subspaces. 

\section{Capacities and Dimension Profiles}
In this section we introduce a notion of dimension derived from capacities that is closely related to the intermediate dimensions and which is amenable to studying projections. 

Throughout this section, let $\theta \in (0, 1]$ and $m \in \{1, \dots, n\}$. For $0 \leq s \leq m$ and $0<r <1$, define the potential kernels
\be\label{ker}
{\phi}_{r, \theta}^{s, m}(x) = \begin{cases} 
      1 & 0\leq |x| < r \\
      \big(\frac{r}{|x|}\big)^s & r\leq |x| < r^\theta   \\
      \frac{r^{\theta(m-s) + s}}{|x|^m}\ & r^\theta \leq |x|
   \end{cases} \qquad (x\in \R^n).
\ee
When $s = m$ this becomes
\be\label{kerb}
\phi_{r, \theta}^{m, m}(x) = \begin{cases} 
      1 &0\leq |x| < r \\
      \big(\frac{r}{|x|}\big)^m & r \leq |x| 
   \end{cases}\qquad (x\in \R^n),
\ee and so corresponds to the kernel $\phi_r^m(x)$ used in \cite{fal:2018,fal:2019} in the context of box-counting  dimensions. As one would expect, this kernel is also recovered when $\theta =1$ where ${\phi}_{r, \theta}^{s, m}$ is independent of $s$. Note that  ${\phi}_{r, \theta}^{s, m}(x)$ is continuous in $x$ and monotonically decreasing in $|x|$.     Letting $\mathcal{M}(E)$ denote the set of Borel probability measures supported on $E$, we say that the \emph{energy} of $\mu \in \mathcal{M}(E)$ with respect to $\phi_{r, \theta}^{s, m}$ is  
\begin{equation*}
\int\int \phi_{r, \theta}^{s, m}(x - y) \,d\mu(x)d\mu(y)
\end{equation*}
and the {\em potential} of $\mu$ at $x \in \R^n$ is
\begin{equation*}
\int \phi_{r, \theta}^{s, m}(x - y)\,d\mu(y).
\end{equation*}
We define the \emph{capacity} $C_{r, \theta}^{s, m}(E)$ of $E$ to be the reciprocal of the minimum energy achieved by probability measures on $E$, that is
\begin{equation*}
{C_{r, \theta}^{s, m}(E)} = \left(\inf\limits_{\mu \in \mathcal{M}(E)} \int\int \phi_{r, \theta}^{s, m}(x - y) \,d\mu(x)d\mu(y)\right)^{-1}.
\end{equation*}
Since $\phi_{r, \theta}^{s, m}(x)$ is continuous in $x$ and strictly positive and $E$ is compact, $C_{r,\theta}^{s, m}(E)$ is positive and finite. For bounded, but not necessarily closed, sets we take the capacity  of a set to be that of its closure.\\

The existence of equilibrium measures for kernels and the relationship between the minimal energy and the corresponding potentials is standard in classical potential theory. We state this in a convenient form; it is easily proved for continuous kernels, see, for example, \cite[Lemma 2.1]{fal:2019}. 

\begin{lemma}\label{attaincap}
Let $E\subset \R^n$ be compact, $m \in \{1,\dots,n\}$, $0 \leq s \leq m$, $\theta \in (0, 1]$  and $0<r<1$. Then there exists an equilibrium measure $\mu \in \mathcal{M}(E)$ such that 
$$\int\int \phi_{r, \theta}^{s, m}(x - y) d\mu(x)d\mu(y) = \frac{1}{C_{r, \theta}^{s, m}(E)} =: \gamma.$$
Moreover, 
$$\int \phi_{r, \theta}^{s, m}(x - y) d\mu(y) \geq \gamma$$
for all $x \in E$, with equality for $\mu$-almost all $x \in E$.
\end{lemma}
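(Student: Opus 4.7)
The plan is to run the classical potential-theoretic argument, noting that for our family of kernels everything stays elementary because $\phi_{r,\theta}^{s,m}$ is continuous on $\R^n$ and uniformly bounded (with $\phi_{r,\theta}^{s,m}(0)=1$ and $\phi_{r,\theta}^{s,m}(x)\leq 1$ for all $x$). In particular, the energy functional is everywhere finite on $\mathcal{M}(E)$ and we do not need to deal with the singularities that complicate the Riesz kernel case.

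For the first assertion, I would note that since $E\subset\R^n$ is compact, $\mathcal{M}(E)$ is weak-$*$ compact and metrisable (Banach--Alaoglu / Prokhorov). Because $\phi_{r,\theta}^{s,m}$ is continuous on the compact set $E-E$, the kernel is bounded and uniformly continuous there, and so the energy map
\[
I(\mu)\ :=\ \int\int \phi_{r,\theta}^{s,m}(x-y)\,d\mu(x)\,d\mu(y)
\]
is continuous in $\mu$ with respect to the weak-$*$ topology (standard: if $\mu_k\to\mu$ weakly, then $\mu_k\otimes\mu_k\to\mu\otimes\mu$ weakly on $E\times E$, and $\phi_{r,\theta}^{s,m}(x-y)$ is a bounded continuous test function). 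Hence $I$ attains its infimum on $\mathcal{M}(E)$, giving an equilibrium measure $\mu$ with $I(\mu)=1/C_{r,\theta}^{s,m}(E)=\gamma$.

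For the variational characterisation, write $U^\mu(x):=\int \phi_{r,\theta}^{s,m}(x-y)\,d\mu(y)$. Fubini gives $\int U^\mu\,d\mu = \gamma$. To see that $U^\mu(x)\geq\gamma$ for every $x\in E$, suppose to the contrary that $U^\mu(x_0)<\gamma$ for some $x_0\in E$, and perturb by $\mu_t:=(1-t)\mu+t\delta_{x_0}$ for $t\in[0,1]$. Expanding the energy,
\[
I(\mu_t)\ =\ (1-t)^2\gamma \,+\, 2t(1-t)\,U^\mu(x_0) \,+\, t^2\phi_{r,\theta}^{s,m}(0),
\]
which is a smooth function of $t$ whose derivative at $t=0$ equals $2(U^\mu(x_0)-\gamma)<0$. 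Thus $I(\mu_t)<\gamma$ for small $t>0$, contradicting minimality. This gives $U^\mu\geq\gamma$ everywhere on $E$, and then the identity $\int U^\mu\,d\mu=\gamma$ forces equality $\mu$-almost everywhere.

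There is no real obstacle here: the only thing one has to watch is that the perturbation argument is valid, i.e.\ that $I(\mu_t)$ can be differentiated in $t$ without boundary effects, and this is immediate because the kernel is bounded so all terms in the expansion are finite. Everything else is routine weak-$*$ compactness and a one-line variational calculation, which is why the authors relegate this to a statement with a reference to \cite[Lemma 2.1]{fal:2019}.
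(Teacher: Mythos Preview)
Your argument is correct and is precisely the standard potential-theoretic proof the paper has in mind; indeed, the paper does not give its own proof but simply remarks that the result ``is easily proved for continuous kernels'' and cites \cite[Lemma~2.1]{fal:2019}, which contains exactly the weak-$*$ compactness plus first-variation argument you wrote out.
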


As we will see, these capacities are closely related to the sums considered in Section \ref{setting}. The following lemma, which parallels Lemma \ref{declem}, enables us to define `intermediate dimension profiles'.

\begin{lemma}\label{caplem}
Let $E\subset \R^n$ be compact,  $m \in \{1,\dots,n\}$, $\theta \in (0, 1]$ and $E \subset \R^n$. If $0<r<1$, then for all $0\leq t \leq s \leq m$,
\be\label{lip2}
-(s - t) \leq \bigg(\frac{\log C_{r, \theta}^{s, m}(E)}{-\log r} -s\bigg)  - \bigg(\frac{\log C_{r, \theta}^{t, m}(E)}{-\log r}-t\bigg) \leq -\theta(s- t)
.
\ee
Moreover, there is a unique $\underline{s}\in [0,m]$ such that $\liminf\limits_{r \rightarrow 0} \frac{\log C_{r, \theta}^{\underline{s}, m}(E)}{-\log r}=\underline{s}$ and a unique $\overline{s}\in [0,m]$ such that $\limsup\limits_{r \rightarrow 0} \frac{\log C_{r, \theta}^{\overline{s}, m}(E)}{-\log r}=\overline{s}$.
\end{lemma}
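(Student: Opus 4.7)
The plan is to follow the same arc as Lemma \ref{declem}, but with pointwise comparisons between the kernels $\phi_{r,\theta}^{s,m}$ and $\phi_{r,\theta}^{t,m}$ replacing the cover-by-cover estimates on the sums $S_{r,\theta}^s(E)$. Lemma \ref{attaincap} will legitimise passing from pointwise bounds on the kernels to inequalities between the capacities.

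The first step is to establish the pointwise comparison
\[
r^{(1-\theta)(s-t)}\,\phi_{r,\theta}^{t,m}(x) \;\le\; \phi_{r,\theta}^{s,m}(x) \;\le\; \phi_{r,\theta}^{t,m}(x) \qquad (x \in \R^n)
\]
for $0 \le t \le s \le m$. I would verify this by inspecting the three branches of \eqref{ker}: on $|x|<r$ the two kernels are both $1$; on $r \le |x| < r^\theta$ their ratio is $(r/|x|)^{s-t}$, which lies in $[r^{(1-\theta)(s-t)},1]$; on $|x| \ge r^\theta$ their ratio is exactly $r^{(1-\theta)(s-t)}$. Integrating this chain of inequalities against $\mu \times \mu$ for any $\mu \in \mathcal{M}(E)$, then taking the infimum and reciprocating, gives
\[
C_{r,\theta}^{t,m}(E) \;\le\; C_{r,\theta}^{s,m}(E) \;\le\; r^{-(1-\theta)(s-t)}\, C_{r,\theta}^{t,m}(E).
\]
Taking logarithms, dividing by $-\log r > 0$ and rearranging now yields \eqref{lip2} directly.

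For the uniqueness claim, set $\tilde g_r(s) := \log C_{r,\theta}^{s,m}(E)/(-\log r) - s$. Inequality \eqref{lip2} says that $\tilde g_r$ has ``slope'' lying in $[-1,-\theta]$ on $[0,m]$; since $\theta > 0$ it is strictly decreasing and $1$-Lipschitz in $s$, uniformly in $r$. Consequently the functions $s \mapsto \liminf_{r \to 0} \tilde g_r(s)$ and $s \mapsto \limsup_{r \to 0} \tilde g_r(s)$ are continuous and strictly decreasing on $[0,m]$. It then remains to check the endpoint signs. At $s=0$ the kernel $\phi_{r,\theta}^{0,m}$ satisfies $\phi_{r,\theta}^{0,m}(x) \le 1$ pointwise (the third branch reduces to $(r^\theta/|x|)^m \le 1$), so the energy of every probability measure on $E$ is at most $1$, giving $C_{r,\theta}^{0,m}(E) \ge 1$ and hence $\tilde g_r(0) \ge 0$. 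At $s=m$, using $|x-y|\le\diam E$ and \eqref{kerb} gives $\phi_{r,\theta}^{m,m}(x-y) \ge (r/\diam E)^m$ for $x,y \in E$ once $r < \diam E$, so $C_{r,\theta}^{m,m}(E) \le (\diam E/r)^m$ and $\tilde g_r(m) \le m \log(\diam E)/(-\log r) \to 0$. The intermediate value theorem then supplies the unique $\underline{s}$ and $\overline{s}$ in $[0,m]$.

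The only genuinely delicate step is the first: getting the correct exponent $(1-\theta)(s-t)$ requires a careful case analysis of \eqref{ker}, particularly in the middle regime $r \le |x| < r^\theta$ where the ratio is not attained by the extremal value but merely bounded by it. Once that comparison is in hand, the remainder is a direct transcription of the argument of Lemma \ref{declem}.
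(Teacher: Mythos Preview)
Your proof is correct and, for the kernel comparison and the derivation of \eqref{lip2}, essentially identical to the paper's: both establish the pointwise inequality $\phi_{r,\theta}^{s,m}\le\phi_{r,\theta}^{t,m}\le r^{-(1-\theta)(s-t)}\phi_{r,\theta}^{s,m}$ by inspecting the three branches of \eqref{ker}, pass to capacities, and take logarithms.

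The one place your argument genuinely diverges is the endpoint check at $s=m$. The paper appeals to an external result from \cite{fal:2019}, namely that $\liminf_{r\to 0}\log C_{r,\theta}^{m,m}(E)/(-\log r)=\lbd\pi_V E\le m$ for almost every $V\in G(n,m)$, to conclude that the function is nonpositive at $s=m$. You instead bound the kernel \eqref{kerb} from below directly by $(r/\diam E)^m$ on $E\times E$, giving $C_{r,\theta}^{m,m}(E)\le(\diam E/r)^m$ and hence $\tilde g_r(m)\le m\log(\diam E)/(-\log r)\to 0$. Your route is more elementary and fully self-contained, and avoids the slightly awkward forward reference to projection results in a lemma that is being set up precisely to study projections; the paper's route, on the other hand, identifies the endpoint value with a known dimension. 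Both are valid. (One trivial edge case: your bound tacitly assumes $\diam E>0$; when $E$ is a single point the capacity is identically $1$ and the conclusion is immediate.)
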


\begin{proof}
By comparison of the kernels it is easily checked that, for $0\leq t \leq s \leq m$,
$$\phi_{r, \theta}^{s, m}(x) \leq \phi_{r, \theta}^{t, m}(x)\leq r^{(t-s)(1-\theta)} \phi_{r, \theta}^{s,m}(x)\qquad (x\in \R^n).$$
Using the definition of capacity and that an   equilibrium  measure on $E$ for the kernel $\phi_{r, \theta}^{s, m}$ is a candidate for an equilibrium measure for  $\phi_{r, \theta}^{t, m}$ and vice-versa, we obtain
$$C_{r, \theta}^{s, m}(E)\geq C_{r, \theta}^{t, m}(E)\geq r^{(s-t)(1-\theta)} C_{r, \theta}^{s, m}(E).$$
Taking logarithms  and rearranging gives \eqref{lip2}.

The inequalities \eqref{lip2} remain true on taking lower limits of the quotients so $\liminf\limits_{r \rightarrow 0} \frac{\log C_{r, \theta}^{s, m}(E)}{-\log r}-s $ is strictly monotonic decreasing and continuous in $s\in [0,m]$. With the kernels \eqref{kerb} it is shown in \cite{fal:2019} that  
\[
\liminf\limits_{r \rightarrow 0} \frac{\log C_{r, \theta}^{m, m}(E)}{-\log r}   = \lbd \pi_V E\leq m
\]
 for projections  $\pi_V E$ of $E$ onto almost all $m$-dimensional subspaces $V\in G(n,m)$, so 
\[
\liminf\limits_{r \rightarrow 0} \frac{\log C_{r, \theta}^{m, m}(E)}{-\log r} -m \leq 0.
\]
 Since the kernels are bounded above by 1, $C_{r, \theta}^{0,m}(E)\geq 1$, so 
\[
\liminf\limits_{r \rightarrow 0} \frac{\log C_{r, \theta}^{0, m}(E)}{-\log r}-0 \geq 0.
\]
 We conclude that there is a unique $s\in [0,m]$ such that $\liminf\limits_{r \rightarrow 0} \frac{\log C_{r, \theta}^{s, m}(E)}{-\log r}=s$, and similarly for the upper limits.
\end{proof}

Thus, for each integer $1 \leq m \leq n$, we define the \emph{lower intermediate dimension profile} of $E \subset \R^n$ as
\be\label{lidp}
\lid^m E =  \bigg(\textnormal{ the unique } s\in [0,m] \textnormal{ such that  } \lowlim\limits_{r \rightarrow 0}\frac{\log C_{r, \theta}^{s, m}(E)}{-\log r} = s\bigg)
\ee
and the \emph{upper intermediate dimension profile} as
\be\label{ludp}
\uid^m E =  \bigg(\textnormal{ the unique } s\in [0,m] \textnormal{ such that  } \uplim\limits_{r \rightarrow 0}\frac{\log C_{r, \theta}^{s, m}(E)}{-\log r} = s\bigg).
\ee

\begin{lemma} \label{monoinm}
The intermediate dimension profiles are increasing in $m$, that is, for compact  $E$,  $\theta \in (0,1]$ and $1 \leq m_1 \leq m_2 \leq n$
\[
\lid^{m_1} E \leq \lid^{m_2} E \qquad \text{and} \qquad \uid^{m_1} E \leq \uid^{m_2} E .
\]
\end{lemma}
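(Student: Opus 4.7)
The plan is to reduce the lemma to a pointwise comparison of the kernels in the parameter $m$ and then invoke the monotonicity in $s$ established in Lemma \ref{caplem}. Fix $\theta\in (0,1]$, $0 \leq s \leq m_1 \leq m_2 \leq n$, and $0 < r < 1$. The first key step is the pointwise inequality
\[
\phi_{r,\theta}^{s,m_2}(x) \leq \phi_{r,\theta}^{s,m_1}(x) \qquad (x \in \R^n).
\]
For $|x| < r^\theta$ the two kernels agree (both equal $1$ or $(r/|x|)^s$ according to the subcase), while for $|x| \geq r^\theta$ their ratio computes to $(r^\theta/|x|)^{m_2-m_1}$, which is at most $1$ since $r^\theta/|x|\leq 1$ and $m_2-m_1\geq 0$. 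Checking each piece of \eqref{ker} settles the comparison.

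From this pointwise bound, the energy of any $\mu \in \mathcal{M}(E)$ with respect to $\phi_{r,\theta}^{s,m_2}$ is at most the energy with respect to $\phi_{r,\theta}^{s,m_1}$, and taking infima and reciprocals yields
\[
C_{r,\theta}^{s,m_2}(E) \geq C_{r,\theta}^{s,m_1}(E).
\]
Taking logarithms, dividing by $-\log r>0$, and passing to $\liminf$ (resp.\ $\limsup$) as $r \to 0$ gives
\[
\lowlim_{r\to 0} \frac{\log C_{r,\theta}^{s,m_2}(E)}{-\log r} \;\geq\; \lowlim_{r\to 0} \frac{\log C_{r,\theta}^{s,m_1}(E)}{-\log r},
\]
and the analogous inequality for $\limsup$.

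To conclude for the lower profile, set $s_1 = \lid^{m_1} E$. By \eqref{lidp} the right-hand side at $s=s_1$ equals $s_1$, so the inequality above forces $\lowlim_{r\to 0} \log C_{r,\theta}^{s_1,m_2}(E)/(-\log r) \geq s_1$. Since $s_1 \leq m_1 \leq m_2$, $s_1$ lies in $[0,m_2]$, and Lemma \ref{caplem} tells us that $s \mapsto \lowlim_{r\to 0}\log C_{r,\theta}^{s,m_2}(E)/(-\log r) - s$ is strictly decreasing on $[0,m_2]$ with unique zero at $\lid^{m_2} E$. The nonnegativity at $s_1$ therefore forces $\lid^{m_2} E \geq s_1 = \lid^{m_1} E$. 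The argument for $\uid^{m} E$ is identical, using $\limsup$ and \eqref{ludp} in place of \eqref{lidp}. The only delicate ingredient is the kernel comparison, and even that reduces to inspecting the three pieces of \eqref{ker}; the remainder is bookkeeping with the characterisation of $\lid^m E$ and $\uid^m E$.
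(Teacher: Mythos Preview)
Your proof is correct and follows the same approach as the paper: the key (and essentially only) ingredient is the pointwise monotonicity of the kernels $\phi_{r,\theta}^{s,m}(x)$ in $m$, which the paper states in one line and you verify explicitly by checking the three pieces of \eqref{ker}. The remaining steps you spell out---passing to capacities, then to the $\liminf/\limsup$, and invoking Lemma~\ref{caplem} to locate the unique zero---are exactly what the paper's ``follows immediately'' suppresses.
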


\begin{proof}
This follows immediately noting that the kernels $ \phi_{r, \theta}^{t, m}(x)$ are clearly \emph{decreasing} in $m$.
\end{proof}

The remainder of the paper concerns the relationship between the intermediate dimensions of a set $E$, defined in terms of the sums over restricted covers of $E$, and intermediate dimension profiles, defined in terms of the capacities. In particular, we will see that for $E \subset \R^n$,   $\lid E = \lid^n E$  and that $\lbd \pi_V E= \lid^m E$ for projections  $\pi_V E$ of $E$ onto almost all $m$-dimensional subspaces $V$ $(1\leq m\leq n-1)$ with respect to the natural invariant measure on the Grassmannian.

\section{Capacities and Intermediate Dimensions}\label{secapdim}

The main result in this section characterises intermediate dimensions of sets $E\subset \R^n$  in terms of dimension profiles which we have defined in terms of capacities $C_{r, \theta}^{s, n}(E)$ with respect to the kernels $\phi_{r, \theta}^{s, n}$. 

\begin{theorem}\label{equivdim}
Let $E \subset \R^n$ be bounded and $\theta \in (0, 1]$. Then
\begin{equation*}
\lid E = \lid^{n} E
\end{equation*}
and
\begin{equation*}
\uid E = \uid^{n} E.
\end{equation*}
\end{theorem}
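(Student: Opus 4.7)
The plan is to establish the two-sided logarithmic comparison
$$c_1 \frac{r^{-s}}{C_{r,\theta}^{s,n}(E)} \;\leq\; S_{r,\theta}^s(E) \;\leq\; c_2\, \psi(r)\, \frac{r^{-s}}{C_{r,\theta}^{s,n}(E)}$$
for every $0 \leq s \leq n$ and every sufficiently small $r$, where $c_1, c_2$ depend only on $n$ and $\theta$, and $\psi(r)$ grows at most polylogarithmically in $1/r$ so that $\log\psi(r) = o(-\log r)$. Taking logarithms, dividing by $-\log r$, and passing to lower and upper limits using Lemmas \ref{declem} and \ref{caplem} yields
$$\frac{\log S_{r,\theta}^s(E)}{-\log r} + \frac{\log C_{r,\theta}^{s,n}(E)}{-\log r} \;=\; s + o(1) \quad \text{as } r \to 0.$$
By the strict monotonicity already established for both quotients, the unique $s$ at which the lower (resp.\ upper) limit of the first term vanishes coincides with the unique $s$ at which the lower (resp.\ upper) limit of the second term equals $s$, giving $\lid E = \lid^{n} E$ and $\uid E = \uid^{n} E$.

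For the lower bound on $S_{r,\theta}^s(E)$ (equivalently, a lower bound on $C_{r,\theta}^{s,n}(E)$): I would take a near-optimal cover $\{U_i\}$ of $E$ with $r \leq |U_i| \leq r^\theta$ and $T := \sum_i |U_i|^s \leq S_{r,\theta}^s(E) + \varepsilon$, pick representatives $x_i \in \overline{U_i} \cap E$, and form the discrete probability measure $\mu = T^{-1}\sum_i |U_i|^s \delta_{x_i}$. The energy expands as $T^{-2}\sum_{i,j} |U_i|^s|U_j|^s \phi_{r,\theta}^{s,n}(x_i - x_j)$, which I would bound by a dyadic decomposition of pairs according to the scale of $|x_i-x_j|$ and the regime of the three-piece kernel \eqref{ker}. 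Each scale contributes an amount of order $r^s/T$, and the number of nonzero contributing scales is at most $O(\log(1/r))$, giving $C_{r,\theta}^{s,n}(E) \gtrsim T\, r^{-s}/\log(1/r)$.

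For the upper bound on $S_{r,\theta}^s(E)$: starting from the equilibrium measure $\mu$ supplied by Lemma \ref{attaincap}, the pointwise estimate $\int \phi_{r,\theta}^{s,n}(x-y)\,d\mu(y) \geq 1/C_{r,\theta}^{s,n}(E)$ on $E$ gets split across the three regimes of $\phi_{r,\theta}^{s,n}$. Pigeonholing picks out, at each $x \in E$, a scale $\rho(x) \in [r, r^\theta]$ at which the measure concentrates with a Frostman-like estimate $\mu(B(x,\rho(x))) \gtrsim (\rho(x)/r)^s/(C_{r,\theta}^{s,n}(E)\log(1/r))$. Applying the $5r$-covering lemma to a maximal disjoint collection of such balls and using $\sum \mu(B(x_i,\rho(x_i))) \leq 1$ then produces a cover of $E$ at diameters in $[r, r^\theta]$ whose $s$-sum is bounded by $r^{-s}/C_{r,\theta}^{s,n}(E)$ up to polylogarithmic factors.

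The main obstacle is the three-piece structure of $\phi_{r,\theta}^{s,n}$: unlike the two-piece kernel $\phi_r^m$ treated in \cite{fal:2018, fal:2019} for the case $\theta = 1$, here the middle regime $r \leq |x| < r^\theta$ couples the inner and outer behaviour and must be handled so that the polylogarithmic loss is uniform in $r$ and does not degenerate as $\theta \to 0$. A related subtlety is that both the cover-to-measure and measure-to-cover constructions must respect the diameter restriction $|U_i| \in [r, r^\theta]$, which is what forces the pigeonholing to choose an adaptive scale $\rho(x)$ rather than a fixed one, and this scale selection is the delicate step.
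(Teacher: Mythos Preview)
Your displayed two-sided comparison has the wrong form. The correct relation (the paper's Proposition~\ref{approxeq}) is
\[
r^{s}\,C_{r,\theta}^{s,n}(E)\ \leq\ S_{r,\theta}^{s}(E)\ \leq\ a_n\bigl\lceil\log_2(|E|/r)+1\bigr\rceil\,r^{s}\,C_{r,\theta}^{s,n}(E),
\]
so that $\frac{\log S}{-\log r}=\frac{\log C}{-\log r}-s+o(1)$ rather than your $\frac{\log S}{-\log r}+\frac{\log C}{-\log r}=s+o(1)$. This is not a cosmetic slip: from your relation one obtains $\liminf\frac{\log S}{-\log r}=s-\limsup\frac{\log C}{-\log r}$, so the vanishing of the \emph{lower} limit of the first term forces the \emph{upper} limit of the second to equal $s$, and you would conclude $\lid E=\uid^{n}E$ instead of $\lid E=\lid^{n}E$. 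A one-point set already falsifies your inequality, since there $S_{r,\theta}^{s}=r^{s}$ and $C_{r,\theta}^{s,n}=1$, and $r^{-s}/C=r^{-s}$ is certainly not bounded above by a constant times $S=r^{s}$ for small $r$.

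The underlying gap is that neither of your two arguments produces a lower bound on $S$ in terms of $C$. Your ``lower bound'' paragraph builds a discrete measure on a near-optimal cover and bounds its energy from above; that yields a \emph{lower} bound on $C$ in terms of the cover sum $T$, which (taking $T$ close to $S$) rearranges to the \emph{upper} bound $S\leq c\,r^{s}C\log(1/r)$ --- the same direction as your second paragraph, not the opposite one. The missing inequality $r^{s}C\leq S$ is in fact the easy half (Lemma~\ref{capacitylb}): the equilibrium measure $\mu$ has $\int\phi_{r,\theta}^{s,n}(x-y)\,d\mu(y)=1/C$ for $\mu$-a.e.\ $x$, hence $\mu(B(x,\delta))\leq(\delta/r)^{s}/C$ for every $r\leq\delta\leq r^{\theta}$, and a one-line mass-distribution argument then gives $\sum|U_i|^{s}\geq r^{s}C$ for every admissible cover. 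Your ``upper bound'' paragraph, by contrast, is essentially the paper's approach (Lemma~\ref{capacityub}); the one point you gloss over is that the pigeonholed scale may exceed $r^{\theta}$, in which case the resulting large ball must be re-covered by balls of diameter $r^{\theta}$ with controlled multiplicity before the $s$-sum can be estimated.
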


This will follow immediately from the following proposition together with the definitions \eqref{altlid},  \eqref{altuid}, \eqref{lidp} and \eqref{ludp}. We may assume throughout that $E$ is compact since the intermediate dimensions are stable under taking closures, see \cite{fafrke:2018}.

\begin{prop}\label{approxeq}
Let $E \subset \R^n$ be compact, $\theta \in (0, 1]$, and $0\leq s \leq n$. Then there is a number $r_0>0$ such that for all $0<r\leq r_0$,
\be\label{ineqs}
r^sC_{r, \theta}^{s, n}(E) \leq S_{r, \theta}^{s}(E) \leq a_n\ceil{\log_2( |E|/r) + 1}r^sC_{r, \theta}^{s, n}(E)
\ee
where the number $a_n$ depends only on $n$. Consequently
$$
\liminf\limits_{r \rightarrow 0} \frac{\log S_{r, \theta}^s(E)}{-\log r} = -s + \liminf\limits_{r \rightarrow 0}\frac{\log C_{r, \theta}^{s, n}(E)}{-\log r}
$$
and
$$
\limsup\limits_{r \rightarrow 0} \frac{\log S_{r, \theta}^s(E)}{-\log r} = -s + \limsup\limits_{r \rightarrow 0}\frac{\log C_{r, \theta}^{s, n}(E)}{-\log r}.
$$
\end{prop}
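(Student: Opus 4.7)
The plan is to prove the two-sided inequality \eqref{ineqs} directly; the consequences for $\liminf$ and $\limsup$ then follow by taking $\log/(-\log r)$ and observing that $\log\lceil \log_2(|E|/r)+1\rceil$ is $o(\log r)$ as $r\to 0$.

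For the lower bound $r^s C_{r,\theta}^{s,n}(E)\le S_{r,\theta}^s(E)$, I would take any cover $\{U_i\}$ of $E$ with $r\le |U_i|\le r^\theta$ and show that \emph{every} probability measure $\mu$ on $E$ has energy at least $r^s/\sum_i|U_i|^s$. After disjointifying the cover so that $\sum_i\mu(U_i)\ge 1$, I would exploit the monotonicity of $\phi_{r,\theta}^{s,n}$: for $x,y\in U_i$, $|x-y|\le |U_i|\le r^\theta$, so $\phi_{r,\theta}^{s,n}(x-y)\ge \phi_{r,\theta}^{s,n}(|U_i|)=(r/|U_i|)^s$ (since $|U_i|$ lies in the middle regime of the kernel). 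Hence
\[
\iint \phi_{r,\theta}^{s,n}(x-y)\,d\mu(x)d\mu(y)\ge \sum_i\mu(U_i)^2\bigl(r/|U_i|\bigr)^s,
\]
and Cauchy--Schwarz applied to $1\le (\sum_i\mu(U_i))^2\le \bigl(\sum_i\mu(U_i)^2(r/|U_i|)^s\bigr)\bigl(\sum_i(|U_i|/r)^s\bigr)$ gives the energy $\ge r^s/\sum_i|U_i|^s$. Taking the infimum over $\mu$ and then over covers yields $C_{r,\theta}^{s,n}(E)\le S_{r,\theta}^s(E)/r^s$.

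For the upper bound, I would start from the equilibrium measure $\mu$ of Lemma \ref{attaincap}, so that $\int \phi_{r,\theta}^{s,n}(x-y)\,d\mu(y)\ge 1/C_{r,\theta}^{s,n}(E)$ for every $x\in E$. Using dyadic scales $\rho_k=2^kr$, $k=0,\dots,K:=\lceil\log_2(|E|/r)\rceil$, and annuli $A_k(x)=B(x,\rho_k)\setminus B(x,\rho_{k-1})$ (with $\rho_{-1}=0$), monotonicity of the kernel gives $\phi_{r,\theta}^{s,n}\le \phi_{r,\theta}^{s,n}(\rho_{k-1})$ on $A_k(x)$, so
\[
\sum_{k=0}^K \phi_{r,\theta}^{s,n}(\rho_{k-1})\,\mu(A_k(x))\ge 1/C_{r,\theta}^{s,n}(E).
\]
By pigeonhole there exists $k(x)$ with $\phi_{r,\theta}^{s,n}(\rho_{k(x)-1})\mu(B(x,\rho_{k(x)}))\ge 1/((K+1)C_{r,\theta}^{s,n}(E))$. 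The $5r$-covering lemma then extracts points $x_i\in E$ with $\{B(x_i,\rho_{k(x_i)})\}$ disjoint and $\{B(x_i,5\rho_{k(x_i)})\}$ covering $E$; disjointness and $\mu(E)\le 1$ give $\sum_i 1/\phi_{r,\theta}^{s,n}(\rho_{k(x_i)-1})\le (K+1)C_{r,\theta}^{s,n}(E)$. To convert this into a cover with diameters in $[r,r^\theta]$, I would keep $B(x_i,5\rho_{k(x_i)})$ when $\rho_{k(x_i)}\le r^\theta/10$ and otherwise chop $B(x_i,5\rho_{k(x_i)})$ into $O\bigl((\rho_{k(x_i)}/r^\theta)^n\bigr)$ pieces of diameter $\le r^\theta$; the requirement $r\le|U_i|$ is ensured once $r$ is small enough that $10r\le r^\theta$.

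The whole upper bound then rests on two elementary identities for the kernel, checked case-by-case across its three regimes: $\rho^s\phi_{r,\theta}^{s,n}(\rho)\le r^s$ and $\rho^n\phi_{r,\theta}^{s,n}(\rho)\le r^{\theta(n-s)+s}$. The first controls the contribution of small balls through $\rho_{k(x_i)}^s\lesssim r^s/\phi_{r,\theta}^{s,n}(\rho_{k(x_i)-1})$, and the second controls the large ones through $(r^\theta)^{s-n}\rho_{k(x_i)}^n\lesssim r^s/\phi_{r,\theta}^{s,n}(\rho_{k(x_i)-1})$ (the powers of $r$ collapse exactly to $r^s$). Summing over $i$ and using the disjointness bound yields $\sum_i|U_i|^s\le a_n(K+1)r^sC_{r,\theta}^{s,n}(E)$. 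The main obstacle is this bookkeeping: one must ensure that the three-regime structure of $\phi_{r,\theta}^{s,n}$ is respected both when lower-bounding the integrand inside covering sets (regime 2) and when selecting and subdividing balls produced by the pigeonhole/$5r$-covering argument (possibly regime 3), and that all constants absorb uniformly into $a_n$ independent of $\theta$ and $s$.
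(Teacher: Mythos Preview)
Your argument is correct, but both halves take a somewhat different route from the paper.

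For the lower bound, the paper (Lemma~\ref{capacitylb}) works with the \emph{equilibrium} measure $\mu$ from Lemma~\ref{attaincap}: on the full-measure set where the potential equals $\gamma=1/C_{r,\theta}^{s,n}(E)$ one reads off $\mu(B(x,\delta))\le \gamma(\delta/r)^s$ directly and sums over a cover. Your approach instead bounds the energy of \emph{every} $\mu\in\mathcal{M}(E)$ from below via Cauchy--Schwarz on the diagonal blocks of a disjointified cover. Your version is a little slicker in that it avoids any appeal to the existence of an equilibrium measure, at the cost of the extra Cauchy--Schwarz step.

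For the upper bound, the paper (Lemma~\ref{capacityub}) uses the Besicovitch covering theorem to extract $c_n$ disjoint subfamilies, and then carries out an explicit three-case split according to whether $k(x)\le M-2$, $k(x)\in\{M-1,M\}$, or $k(x)\ge M+1$, where $2^{M-1}r<r^\theta\le 2^M r$. You replace Besicovitch by the $5r$-covering lemma and collapse the case analysis into the two kernel identities $\rho^s\phi_{r,\theta}^{s,n}(\rho)\le r^s$ and $\rho^n\phi_{r,\theta}^{s,n}(\rho)\le r^{\theta(n-s)+s}$; this is cleaner bookkeeping and makes the $\theta$- and $s$-uniformity of the constants transparent. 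One small point worth noting: the paper states Lemma~\ref{capacityub} for an \emph{arbitrary} measure satisfying the potential lower bound \eqref{cond}, not just the equilibrium measure, because that generality is needed later in Section~\ref{projsec} for projected measures; your argument goes through unchanged under that weaker hypothesis, so nothing is lost.
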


\noindent{\bf Proof of Proposition \ref{approxeq}}\label{capintsec}\ 
We prove the left hand inequality of \eqref{ineqs}  in  Lemma \ref{capacitylb} and the right hand inequality in 
Lemma \ref{capacityub}.

\begin{lemma}\label{capacitylb}
Let $E\subset \R^n$ be compact, $\theta \in (0, 1]$, $0<r <1 $ and $0 \leq s \leq n$. 
Then
\be\label{lefth}
r^{s}C_{r, \theta}^{s, n}(E)\leq S_{r, \theta}^s(E).
\ee

\begin{proof}
By Lemma \ref{attaincap} there exists an equilibrium measure $\mu \in \mathcal{M}(E)$ and a set $E_0$ with $\mu(E_0) = 1$ such that
$$\int \phi_{r, \theta}^{s, n}(x - y) d\mu(y) = \frac{1}{C_{r, \theta}^{s, n}(E)} =: \gamma$$
for all $x \in E_0$. Let $r \leq \delta \leq r^\theta$ and $x \in E_0$. Then
\begin{equation}\label{zz}
\gamma = \int {\phi}_{r, \theta}^{s, n}(x - y) d\mu(y) 
\geq \int \left(\frac{r}{\delta}\right)^s1_{B(0, \delta)}(x - y) d\mu(y) 
\geq \left(\frac{r}{\delta}\right)^s \mu(B(x, \delta)).
\end{equation}
Let $\{U_i\}_{i }$ be a finite cover of $E$ by sets of diameters $r \leq |U_i| \leq r^\theta$ and define $\mathcal{I} = \{i : U_i \cap E_0 \neq \emptyset \}$. Then for each $i \in \mathcal{I}$, there exists $x_i \in U_i \cap E_0$ so that $U_i \subset B(x_i, |U_i|)$. Hence
\begin{equation*}
1 = \mu(E_0) \leq \sum\limits_{i \in \mathcal{I}} \mu(U_i) 
\leq  \sum\limits_{i \in \mathcal{I}} \mu(B(x_i, |U_i|))\leq r^{-s}\gamma \sum\limits_{i \in \mathcal{I}}|U_i|^s
\end{equation*}
by (\ref{zz}), and so
\begin{equation*}
\sum\limits_{i } |U_i|^s \geq r^sC_{r,\theta}^{s, n}(E),
\end{equation*}
which yields the desired result upon taking the infimum over all such covers.
\end{proof}
\end{lemma}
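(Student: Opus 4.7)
The natural route is potential-theoretic, leveraging the equilibrium measure $\mu$ supplied by Lemma~\ref{attaincap}, whose minimal energy is $\gamma := 1/C_{r,\theta}^{s,n}(E)$ and whose potential $\int\phi_{r,\theta}^{s,n}(x-y)\,d\mu(y)$ equals $\gamma$ on a set $E_0 \subseteq E$ with $\mu(E_0)=1$.

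The first key step is to convert the potential identity into a ball-mass estimate. For any scale $\delta\in[r,r^\theta]$, the piecewise definition of the kernel guarantees that $\phi_{r,\theta}^{s,n}(z) \geq (r/\delta)^s$ uniformly on $B(0,\delta)$: on the inner region $|z|<r$ the kernel equals $1\geq (r/\delta)^s$, while on the middle region $r\leq |z|<\delta\leq r^\theta$ it equals $(r/|z|)^s\geq (r/\delta)^s$. Restricting the potential integral at a point $x\in E_0$ to the ball $B(x,\delta)$ therefore produces
\[
\gamma \;\geq\; \left(\tfrac{r}{\delta}\right)^{s}\mu(B(x,\delta)),
\qquad\text{hence}\qquad
\mu(B(x,\delta))\;\leq\;\gamma\,(\delta/r)^{s}.
\]

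The second step feeds this into an arbitrary admissible cover $\{U_i\}$ of $E$ with $|U_i|\in[r,r^\theta]$. Pick those $i\in\I$ for which $U_i\cap E_0\neq\emptyset$, and for each such $i$ choose $x_i\in U_i\cap E_0$, so that $U_i\subseteq B(x_i,|U_i|)$. Since $\{U_i\}_{i\in\I}$ covers the full-measure set $E_0$, applying the ball-mass bound with $\delta=|U_i|$ and summing yields
\[
1\;=\;\mu(E_0)\;\leq\;\sum_{i\in\I}\mu(B(x_i,|U_i|))\;\leq\;\gamma\,r^{-s}\sum_{i}|U_i|^{s},
\]
whence $\sum_i|U_i|^s\geq r^s/\gamma = r^sC_{r,\theta}^{s,n}(E)$. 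Taking the infimum over admissible covers finishes the proof.

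There is no serious obstacle: the piecewise design of $\phi_{r,\theta}^{s,n}$ is precisely engineered so that the uniform lower bound $(r/\delta)^s$ on $B(0,\delta)$ holds across the whole admissible diameter range $\delta\in[r,r^\theta]$, which is exactly the lever needed to translate an energy identity into an $s$-cover-sum bound. By contrast, the reverse inequality of Proposition~\ref{approxeq} (treated in Lemma~\ref{capacityub}) is where one expects the substantive work, since going from a capacity upper bound to an explicit cover will presumably require dyadic scale decomposition and a Vitali-type selection — explaining the extra logarithmic factor $\lceil\log_2(|E|/r)+1\rceil$ that appears there.
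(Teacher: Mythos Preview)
Your proof is correct and follows essentially the same approach as the paper: use the equilibrium measure from Lemma~\ref{attaincap}, derive the ball-mass estimate $\mu(B(x,\delta))\leq \gamma(\delta/r)^s$ from the kernel lower bound on $B(0,\delta)$, and sum over an admissible cover meeting $E_0$. Your write-up is slightly more explicit in justifying the kernel lower bound on the two regions $|z|<r$ and $r\leq |z|<\delta$, but the argument is otherwise identical.
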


Note that by comparing kernels, $C_{r, \theta}^{s, m}(E) \leq C_{r, \theta}^{s, n}(E)$ for $m\leq n$ so \eqref{lefth} implies the weaker conclusion that  $r^{s}C_{r, \theta}^{s, m}(E)\leq S_{r, \theta}^s(E)$.

In the following proof, we use potential estimates to find a Besicovitch cover of $E$ by balls of relatively large measure. The Besicovitch covering lemma gives a bounded number of families of disjoint such balls with their union covering $E$. The balls with diameters between $r$ and $r^\theta$, together with covers of any larger balls by balls of diameters at most $r^\theta$, provide efficient covers for estimating the sums $S_{r,\theta}^s(E)$. Additionally, in the next section, Lemma \ref{capacityub} will be important when considering intermediate dimensions of projections.

\begin{lemma}\label{capacityub}
Let $E\subset \R^n$ be compact, $0 \leq s \leq n$ and  $\theta \in (0, 1]$. If there exists a measure $\mu \in \mathcal{M}(E)$ and $\gamma > 0$ such that
\begin{equation}\label{cond}
\int \phi_{r, \theta}^{s, n}(x- y) d\mu(y) \geq \gamma
\end{equation}
for all $x \in E$, then there is a number $r_0>0$ such that for all $0<r\leq r_0$,
$$ S_{r,\theta}^s(E) \leq a_n\ceil{\log_2(|E|/r)+1}\frac{r^{s}}{\gamma}$$
where the constant $a_n$ depends only on $n$. In particular, 
$$ S_{r,\theta}^s(E) \leq a_n\ceil{\log_2(|E|/r)+1} C_{r,\theta}^{s, n}(E)r^s.$$

\begin{proof}
To avoid ambiguity we will assume that $\theta \in (0, 1)$, though the proof is virtually the same when $\theta=1$, essentially by taking $M=0$; this `box-counting dimension' case is also covered in \cite{fal:2019}.

Let $D = \ceil{\log_2(|E|/r)}$ and let $M$ be the integer satisfying
\be\label{Mdef}2^{M-1}r < r^\theta \leq 2^{M}r.
\ee
We choose $r_0$ sufficiently small to ensure  that $2\leq M\leq D-2$ for all $0<r\leq r_0$.
For $x \in E$, using \eqref{cond} and estimating the kernel $\phi_{r, \theta}^{s, n}(x - y)$ given by \eqref{ker} over consecutive annuli  $B(x, 2^{k}r)\setminus B(x, 2^{k-1}r)\ (1\leq k\leq D)$, 
\begin{align*}
\gamma &\leq \int \phi_{r, \theta}^{s, n}(x -y)d\mu(y)\\ 
&\leq \mu(B(x, r)) + \sum\limits_{k=1}^{D} \int_{B(x, 2^{k}r)\setminus B(x, 2^{k-1}r)}\phi_{r, \theta}^{s, n}(x - y)d\mu(y)\\
&\leq \mu(B(x, r)) + \sum\limits_{k=1}^{M}\int_{B(x, 2^{k}r)\setminus B(x, 2^{k-1}r)}2^{-(k-1)s}d\mu(y) \\
&\quad + \sum\limits_{k = M+1}^{D} \int_{B(x, 2^{k}r)\setminus B(x, 2^{k-1}r)}r^{\theta(n-s) + s}(2^{k-1}r)^{-n}d\mu(y)\\
&\leq \sum\limits_{k=0}^{M-2}2^{s} \mu(B(x, 2^{k}r))2^{-ks} +  \sum\limits_{k=M-1}^{M}2^{s} \mu(B(x, 2^{k}r))2^{-ks} \\
&\quad +r^{(\theta-1)(n-s)}\sum\limits_{k = M+1}^{D} \mu(B(x, 2^{k}r))
2^{-(k-1)n}.
\end{align*}
Hence, for each $x \in E$, there exists some integer $0 \leq k(x) \leq D$    such that one of the above summands is at least the arithmetic mean of the sum. There are three cases. We will use that there are numbers $d_n$ depending only on $n$ such that every ball of radius $\rho$ in $\R^n$ may be covered by at most $\lambda^{-n} d_n $ balls of diameter $\lambda \rho$ for all $0<\lambda \leq 1$ ($d_n = 3^n n^{n/2}$ will certainly do).
\medskip 

(i) If $0 \leq k(x) \leq M-2$ then 
$$
\frac{\gamma}{D + 1} \leq 2^s\mu(B(x, 2^{k(x)}r))2^{-k(x)s} = 4^s\mu(B(x, 2^{k(x)}r))|B(x, 2^{k(x)}r)|^{-s}r^s,
$$
so  
\begin{equation}\label{bdiam}
|B(x, 2^{k(x)}r)|^{s} \leq (D+1)\gamma^{-1}  4^sr^s\mu(B(x, 2^{k(x)}r));
\end{equation}

(ii) if $M - 1 \leq k(x) \leq M$ then
\begin{align*}
\frac{\gamma}{D + 1} &\leq 2^s\mu(B(x, 2^{k(x)}r))2^{-k(x)s} \\
& \leq \mu(B(x, 2^{k(x)}r))2^{s}2^{-(M-1)s}
\leq \mu(B(x, 2^{k(x)}r))2^{2s}r^{(1-\theta)s},
\end{align*}
so 
\begin{equation}\label{cdiam}
4^{n} d_n\, r^{\theta s} \leq 4^n2^{2s}(D+1)\gamma^{-1}d_nr^{s} \mu(B(x, 2^{k(x)}r));
\end{equation}

(iii) if $M + 1 \leq k(x) \leq D$ then
$$
\frac{\gamma}{D + 1}  \leq r^{(\theta-1)(n-s)}\mu(B(x, 2^{k(x)}r))2^{-(k(x) - 1)n},
$$
so 
\begin{equation}\label{ddiam}
d_n 2^{k(x)n} r^{(1-\theta)n} \leq 2^n(D+1)\gamma^{-1}d_nr^{s(1-\theta)} \mu(B(x, 2^{k(x)}r)).
\end{equation}
The cover of $E$ by the balls $\mathcal{B} = \{B(x, 2^{k(x)}r) : x \in E\}$ is a Besicovitch cover, that is each point of $E$ is at the centre of some ball in the collection. The Besicovitch covering theorem, see for example \cite[Theorem 2.7]{mat:1975}, allows us to extract  subcollections $\mathcal{C}_1, \dots, \mathcal{C}_{c_n}$ of disjoint balls from $\mathcal{B}$ where $c_n$ depends only on $n$ and such that $E \subset \bigcup_i \bigcup\limits_{B \in \mathcal{C}_i} B$.  Let 
$$\mathcal{E}_i = \{B(x, 2^{k(x)}r) \in \mathcal{C}_i: M - 1 \leq k(x) \leq M\}$$
and
$$
\mathcal{F}_i = \{B(x, 2^{k(x)}r) \in \mathcal{C}_i: M + 1 \leq k(x) \leq D\}. 
$$
From \eqref{Mdef} each $B\in \mathcal{C}_i \setminus (\mathcal{E}_i\cup \mathcal{F}_i)$ has diameter at most $r^\theta$.
Also, for each $B = B(x, 2^{k(x)}r) \in \mathcal{E}_i$ let $\mathcal{D}_B$ denote a collection of at most $ (2^M r /r^\theta)^n d_n  \leq 2^n d_n $ balls of diameter $r^\theta$ that cover $B$, and  for each $B = B(x, 2^{k(x)}r) \in \mathcal{F}_i$ let $\mathcal{D}_B$ denote a collection of at most $\big(2^{k(x)}r/r^\theta\big)^n d_n$ balls of diameter $r^\theta$ that cover $B$.\\

For each $i = 1,\dots,c_n$, we consider the cover
$$
\widetilde{\mathcal{C}}_i:= \big(\mathcal{C}_i \setminus (\mathcal{E}_i\cup \mathcal{F}_i)\big) \cup \bigcup\limits_{B \in \mathcal{E}_i \cup\mathcal{F}_i } \mathcal{D}_B
$$
of $\bigcup\limits_{B \in \mathcal{C}_i} B$. Then using \eqref{bdiam} - \eqref{ddiam},
\begin{align*}
\sum\limits_{B \in \mathcal{C}_i \setminus (\mathcal{E}_i\cup \mathcal{F}_i)} |B|^s &+ \sum\limits_{B \in \mathcal{E}_i}\sum\limits_{B' \in \mathcal{D}_B} |B'|^s + \sum\limits_{B \in \mathcal{F}_i}\sum\limits_{B' \in \mathcal{D}_B} |B'|^s\\
&\leq 
4^s(D+1)\frac{r^s}{\gamma}\sum\limits_{B \in \mathcal{C}_i \setminus (\mathcal{E}_i\cup \mathcal{F}_i)} \mu(B) + \sum\limits_{B \in \mathcal{E}_i}4^{n}d_n\,r^{\theta s}\\
&\quad + \sum\limits_{B \in \mathcal{F}_i}d_n\left(\frac{2^{k(x)}r}{r^\theta}\right)^{n}r^{\theta s} \\
&\leq 
4^s(D+1)\frac{r^s}{\gamma} 
+ \sum\limits_{B \in \mathcal{E}_i}\frac{4^n2^{2s}(D+1)d_n}{\gamma}r^{s} \mu(B)\\
&\quad + \sum\limits_{B \in \mathcal{F}_i}\frac{2^n (D+1)d_n}{\gamma}r^{s(1-\theta)}r^{\theta s}\mu(B)\\
&\leq 
4^s(D+1)\frac{r^s}{\gamma} 
+\frac{4^n2^{2s}(D+1)d_n}{\gamma}r^{s}\sum\limits_{B \in \mathcal{E}_i}\mu(B)\\
&\quad + \frac{2^n (D+1)d_n}{\gamma}r^s\sum\limits_{B \in \mathcal{F}_i}\mu(B)\\
&\leq 
(4^{n}+2\cdot4^{2n} d_n)(D+1)\frac{r^s}{\gamma},
\end{align*}
where we have used that  $\mathcal{C}_i$ is a disjoint collection of balls. Hence, writing $\mathcal{C} =\bigcup_i\widetilde{\mathcal{C}}_i$,
\begin{equation*}\label{gamn}
S_{r, \theta}^{s}(E) \leq \sum_{B \in \mathcal{C}} |B|^s \leq c_n (4^{n}+2\cdot4^{2n} d_n)(D+1)\frac{r^s}{\gamma} = a_n\ceil{\log_2(|E|/r) +1}\frac{r^s}{\gamma}
\end{equation*}
on setting $a_n = c_n (4^{n}+2\cdot4^{2n} d_n)$. 
\end{proof}
\end{lemma}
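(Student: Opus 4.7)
My plan is to exploit the pointwise potential lower bound $\int \phi_{r,\theta}^{s,n}(x-y)\, d\mu(y) \geq \gamma$ by slicing each potential integral into dyadic annuli, extracting from each $x\in E$ a single ``good'' scale $k(x)$ via pigeonholing, and then turning the resulting balls into an admissible cover of $E$ using the Besicovitch covering theorem. To set things up, let $D=\ceil{\log_2(|E|/r)}$ (so that $B(x,2^D r)\supset E$ for any $x\in E$) and let $M$ be the unique integer with $2^{M-1}r<r^\theta\leq 2^M r$. Decomposing the support of $\mu$ into $B(x,r)$ together with the annuli $B(x,2^k r)\setminus B(x,2^{k-1}r)$ for $1\leq k\leq D$ and bounding $\phi_{r,\theta}^{s,n}$ on each annulus by its largest value there yields a sum of at most $D+1$ nonnegative terms whose total is at least $\gamma$. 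The three cases in \eqref{ker} correspond to the index ranges $k\leq M-2$, $M-1\leq k\leq M$, and $k\geq M+1$, in which the kernel behaves like $1$, $(r/|y-x|)^s$, and $r^{\theta(n-s)+s}/|y-x|^n$ respectively.

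Pigeonholing then produces, for each $x\in E$, an index $0\leq k(x)\leq D$ for which the $k(x)$-th term is at least $\gamma/(D+1)$. In each of the three ranges, this rearranges to a quantitative lower bound on $\mu(B(x,2^{k(x)}r))$ in terms of the radius, $r$, $\gamma$ and $D$. I would then apply the Besicovitch covering theorem to $\mathcal{B}=\{B(x,2^{k(x)}r):x\in E\}$ to extract $c_n$ subcollections $\mathcal{C}_1,\dots,\mathcal{C}_{c_n}$ of pairwise disjoint balls whose union still covers $E$. Balls in some $\mathcal{C}_i$ whose radius already lies in $[r,r^\theta]$ are kept; balls of larger radius are replaced by a standard Euclidean cover of at most $d_n(2^{k(x)}r/r^\theta)^n$ balls of diameter $r^\theta$, so that every ball in the final cover has diameter in $[r,r^\theta]$ and is therefore admissible for $S_{r,\theta}^s(E)$. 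Disjointness of each $\mathcal{C}_i$ converts the sum of the lower bounds on $\mu(B)$ into the single upper bound $\mu(E)=1$; summing the $s$-power contributions across all $c_n$ subfamilies then yields $S_{r,\theta}^{s}(E)\leq a_n(D+1)r^s/\gamma$ for some dimensional constant $a_n$, which is the stated bound. The final assertion follows by taking $\mu$ to be the equilibrium measure of Lemma \ref{attaincap} with $\gamma=1/C_{r,\theta}^{s,n}(E)$, which automatically satisfies the hypothesis \eqref{cond}.

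The hardest step will be the bookkeeping in the far-field regime $k(x)\geq M+1$, where the kernel contributes the awkward factor $r^{\theta(n-s)+s}/|y-x|^n$. When a ball of radius roughly $2^{k(x)}r\geq r^\theta$ is subdivided into about $(2^{k(x)}r/r^\theta)^n$ sub-balls of diameter $r^\theta$ and one sums $|B'|^s=r^{\theta s}$ over them, a prefactor of $(2^{k(x)}r/r^\theta)^n$ appears that must be precisely cancelled by the weight $r^{(\theta-1)(n-s)}$ arising from the kernel bound on that annulus, so that the final exponent of $r$ reduces to $s$. It is exactly this coordination between the exponents $n$, $s$ and $\theta$ baked into the definition of $\phi_{r,\theta}^{s,n}$ that makes the accounting close. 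Once this is checked, the logarithmic factor $\ceil{\log_2(|E|/r)+1}$ is the direct cost of the pigeonholing over the $D+1$ scales.
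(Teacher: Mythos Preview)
Your proposal is correct and follows essentially the same route as the paper's proof: dyadic decomposition of the potential into $D+1$ annular terms, pigeonholing to select a scale $k(x)$ for each $x$, Besicovitch's covering theorem to pass to $c_n$ disjoint subfamilies, and subdivision of the oversized balls into pieces of diameter $r^\theta$, with the crucial exponent cancellation in the far-field case $k(x)\geq M+1$ exactly as you anticipate. The only cosmetic difference is that the paper treats the borderline indices $k(x)\in\{M-1,M\}$ as a separate case (the balls there have diameter between $r^\theta$ and $2r^\theta$, so they still need subdividing, but into only $O_n(1)$ pieces), whereas you fold this into the general ``large radius'' subdivision; either bookkeeping works.
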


\section{Intermediate dimensions of Projections}\label{projsec}

Our main theorem in this section is that the intermediate dimension profiles $\lid^m E$ and $\uid^m E$ give the almost sure constant values of the lower and upper intermediate dimensions of orthogonal projections of $E$ onto $m$-dimensional subspaces. Thus, intuitively, we can regard $\lid^m E$ and $\uid^m E$ as the intermediate dimensions of $E$ when regarded from an $m$-dimensional viewpoint. Let $\gamma_{n, m}$ be the natural invariant measure on the Grassmannian $G(n, m)$ of $m$-dimensional subspaces of $\R^n$, see \cite[Section 3.9]{mat:book}.

\begin{theorem}\label{main}
Let $E \subset \R^n$ be bounded. Then, for  all $V \in G(n, m)$
\begin{equation}\label{mains}
\lid \pi_V E \leq \lid^{m} E \quad\mathrm{and}\quad \uid \pi_V E \leq \uid^{m} E
\end{equation}
for all    $\theta\in (0,1]$.  Moreover, for $\gamma_{n, m}$-almost all $V \in G(n, m)$,
\begin{equation}\label{mainas}
\lid \pi_V E = \lid^{m} E \quad\mathrm{and}\quad \uid \pi_V E = \uid^{m} E
\end{equation}
for  all $\theta\in (0,1]$.
\end{theorem}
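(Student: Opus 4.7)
The plan is to deduce both parts of Theorem~\ref{main} from the capacity characterisation of intermediate dimensions (Theorem~\ref{equivdim}), combined with a potential-theoretic averaging argument over $G(n,m)$ in the spirit of Kaufman, but with the kernel $\phi_{r,\theta}^{s,m}$ playing the role of the Riesz kernel. I may assume $E$ is compact, since the intermediate dimensions are stable under closure.

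For the inequalities in~\eqref{mains}, which hold for every $V$, I fix $\theta \in (0,1]$ and set $s = \lid^{m} E$. Let $\mu \in \mathcal{M}(E)$ be the equilibrium measure on $E$ for $\phi_{r,\theta}^{s,m}$ supplied by Lemma~\ref{attaincap}, with $\gamma = 1/C_{r,\theta}^{s,m}(E)$. Because $|\pi_V(x-y)| \leq |x-y|$ and $\phi_{r,\theta}^{s,m}$ is decreasing in $|\cdot|$, the push-forward $\pi_V\mu$ on $\pi_V E \subset \R^m$ satisfies, at every $u = \pi_V x \in \pi_V E$,
\[
\int \phi_{r,\theta}^{s,m}(u - v)\,d\pi_V\mu(v) = \int \phi_{r,\theta}^{s,m}(\pi_V(x-y))\,d\mu(y) \geq \int \phi_{r,\theta}^{s,m}(x-y)\,d\mu(y) \geq \gamma.
\]
Applying Lemma~\ref{capacityub} to $\pi_V E \subset \R^m$ with this measure (the proof goes through verbatim with $m$ in place of $n$ and $\phi_{r,\theta}^{s,m}$ on $\R^m$ throughout) yields
\[
S_{r,\theta}^{s}(\pi_V E) \leq a_m \ceil{\log_2(|\pi_V E|/r) + 1}\, r^s\, C_{r,\theta}^{s,m}(E).
\]
Taking $\liminf$ of $\log/(-\log r)$ at $s = \lid^{m} E$ gives $\liminf_{r \to 0} \log S_{r,\theta}^{s}(\pi_V E)/(-\log r) \leq 0$, so $\lid \pi_V E \leq \lid^{m} E$ by~\eqref{altlid} and Lemma~\ref{declem}. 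The upper-dimension case is identical with $\limsup$.

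For the almost-sure equalities in~\eqref{mainas}, only the reverse inequalities $\lid \pi_V E \geq \lid^{m} E$ and $\uid \pi_V E \geq \uid^{m} E$ need to be established. The crucial ingredient is a Grassmannian averaging estimate of the form
\[
\int_{G(n,m)} \phi_{r,\theta}^{s,m}(\pi_V z)\,d\gamma_{n,m}(V) \leq c_{n,m,s}\,\phi_{r,\theta}^{s,m}(z) \qquad (z \in \R^n,\ 0 \leq s < m),
\]
with constant independent of $r$ and $\theta$. One verifies this by disintegrating $\gamma_{n,m}$ along $t = |\pi_V z|/|z|$, whose pushforward density is comparable to $t^{m-1}$ near $0$, and checking the three pieces of~\eqref{ker} in turn; the kernel was precisely engineered so that this average reproduces it. Combined with Fubini, for any $\mu \in \mathcal{M}(E)$,
\[
\int_{G(n,m)} \int\int \phi_{r,\theta}^{s,m}(\pi_V(x-y))\,d\mu(x)d\mu(y)\,d\gamma_{n,m}(V) \leq c_{n,m,s} \int\int \phi_{r,\theta}^{s,m}(x-y)\,d\mu(x)d\mu(y),
\]
and taking $\mu$ as the equilibrium measure on $E$ (noting that the inner integral on the left dominates $1/C_{r,\theta}^{s,m}(\pi_V E)$) yields $\int_{G(n,m)} 1/C_{r,\theta}^{s,m}(\pi_V E)\,d\gamma_{n,m}(V) \leq c_{n,m,s}/C_{r,\theta}^{s,m}(E)$. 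Markov's inequality along a dyadic sequence $r_k = 2^{-k}$ with threshold $\Lambda_k = k^2 c_{n,m,s}/C_{r_k,\theta}^{s,m}(E)$ makes the exceptional sets summable, and Borel-Cantelli gives, for $\gamma_{n,m}$-almost every $V$, $\liminf_{k} \log C_{r_k,\theta}^{s,m}(\pi_V E)/(-\log r_k) \geq \liminf_k \log C_{r_k,\theta}^{s,m}(E)/(-\log r_k) \geq s$ at $s = \lid^{m} E$. A standard comparison, showing that $C_{r,\theta}^{s,m}(F)$ varies only by a bounded factor for $r$ in a dyadic range, extends this to $\liminf_{r\to 0}$, and combined with Lemma~\ref{caplem} and Theorem~\ref{equivdim} applied to $\pi_V E$ this gives $\lid \pi_V E \geq \lid^{m} E$. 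The upper case is analogous.

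To assemble a single almost-sure set of $V$ on which~\eqref{mainas} holds for every $\theta \in (0,1]$, I would run the argument along a countable dense set $\{\theta_k\} \subset (0,1]$, discard the countable union of exceptional sets, and then extend to arbitrary $\theta$ using monotonicity in $\theta$ together with the continuity of $\lid \pi_V E$ on $(0,1]$ from~\cite{fafrke:2018} and the corresponding continuity of the profile $\lid^{m} E$ (which follows from the capacity definition by an argument parallel to Lemma~\ref{caplem}). The main obstacle is the Grassmannian averaging estimate with a constant independent of $r$ and $\theta$: the piecewise definition of $\phi_{r,\theta}^{s,m}$ and the thresholds $|z|=r$ and $|z|=r^\theta$ require a more delicate case analysis than in the single-kernel setting of~\cite{fal:2019}. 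The boundary case $s = m$, where the natural constant degenerates, should be dealt with separately either by invoking the box-counting projection theorem of~\cite{fal:2019} directly or by taking the limit $s \uparrow m$ of the inequalities already established for $s < m$.
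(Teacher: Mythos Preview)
Your treatment of the upper bounds \eqref{mains} is essentially identical to the paper's: push forward the equilibrium measure, use that $\phi_{r,\theta}^{s,m}$ decreases in $|\cdot|$ and $\pi_V$ is a contraction, and feed the resulting potential lower bound into Lemma~\ref{capacityub} applied in $\R^m$. Likewise, handling ``for all $\theta$'' by continuity on $(0,1]$ and restricting to a countable dense set of $\theta$ matches the paper.

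For the almost-sure lower bounds your route differs from the paper's. You average the \emph{full} kernel $\phi_{r,\theta}^{s,m}(\pi_V z)$ over $G(n,m)$, compare capacities of $\pi_V E$ with those of $E$, run Borel--Cantelli, and then invoke Theorem~\ref{equivdim} in $\R^m$ to convert $\lid^m \pi_V E$ back to $\lid \pi_V E$. The paper instead introduces the \emph{truncated} kernel $\widetilde{\phi}_{r,\theta}^{s}$ of \eqref{kerhat}, proves the two-sided comparability $\int \widetilde{\phi}_{r,\theta}^{s}(\pi_V x)\,d\gamma_{n,m}(V)\simeq \phi_{r,\theta}^{s,m}(x)$ (Lemma~\ref{kernellem}), and uses Lemma~\ref{kerswap} to bound $S_{r,\theta}^{s}(\pi_V E)$ directly without passing through $C_{r,\theta}^{s,m}(\pi_V E)$. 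Your route is arguably more conceptual, since it reuses Theorem~\ref{equivdim} rather than a bespoke lemma.

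However, your key averaging inequality as stated is false: the constant is \emph{not} independent of $r$. Writing $\phi_{r,\theta}^{s,m}(y)=\widetilde{\phi}_{r,\theta}^{s}(y)+r^{\theta(m-s)+s}|y|^{-m}\,1_{\{|y|>r^\theta\}}$ and integrating the tail piece over $G(n,m)$ (using $\gamma_{n,m}\{V:|\pi_V z|\le u\}\simeq \min\{1,(u/|z|)^m\}$) produces, for $|z|>r^\theta$, a term of order $\phi_{r,\theta}^{s,m}(z)\log(|z|/r^\theta)$; hence the best uniform bound is
\[
\int_{G(n,m)} \phi_{r,\theta}^{s,m}(\pi_V z)\,d\gamma_{n,m}(V)\ \le\ c_{n,m,s}\,\big(1+\log_+(|E|/r)\big)\,\phi_{r,\theta}^{s,m}(z).
\]
This is precisely why the paper truncates to $\widetilde{\phi}$: dropping the $|y|^{-m}$ tail removes the logarithmic divergence and yields the clean two-sided estimate of Lemma~\ref{kernellem}. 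Fortunately the extra $\log(1/r)$ factor is harmless for your argument---after taking $\log/(-\log r)$ it disappears, and your thresholds $\Lambda_k$ just pick up an extra factor of $k$ while remaining summable---so your strategy still goes through once you correct the statement. You should also note, for the upper-dimension case, that $r_k=2^{-k}$ will not suffice: as the paper does, choose $(r_k)$ with $r_k\le 2^{-k}$ along which the $\limsup$ for $C_{r,\theta}^{s,m}(E)$ is attained.
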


To prove Theorem \ref{main} we begin with some technical lemmas relating the  kernel $\phi_{r, \theta}^{s,m}$ to the integral over $V$ of certain kernels defined on  $V \in G(n, m)$. We derive this from a standard estimate on integrals of the characteristic functions of slabs, which has been used in several results on projections, see for example \cite[Lemma 3.11]{mat:book} and  \cite{fal:2019}. The next lemma states this standard fact; we indicate the  proof for the lower bound which does not seem readily accessible.
For this we use the kernels  
\be\label{kerbd}
\phi_r^m(x) = \min\bigg\{1, \left(\frac{r}{|x|}\right)^m\bigg\}\qquad (x\in \R^n)
\ee
for $r>0$ and $m>0$ which were used in \cite{fal:2019} in connection with box-counting dimensions of projections.

\begin{lemma}\label{matext}
There exist constants $c_{n,m}, d_{n,m}> 0$ depending only on $n$ and $m$ such that for all $x \in \R^n$ and $r \in (0, 1)$,
$$
c_{n,m}\phi_r^m(x) \leq \int 1_{[0, r]}(|\pi_V x|) d\gamma_{n, m}(V) \leq d_{n,m}\phi_r^m(x).
$$
\begin{proof}
The right-hand inequality is given in \cite[Lemma 3.11]{mat:book}. The left-hand inequality is obvious when $|x| \leq r$, otherwise we may adapt the proof of  \cite[Lemma 3.11]{mat:book} by using the estimate 
\begin{align*}
\sigma^{n-1}\bigg(&\Big\{y \in S^{n-1} : \Big(\sum\limits_{i = m + 1}^{n}y_i^2\Big)^{1/2} \leq r\Big\}\bigg)\\
&\geq \ \alpha(n)^{-1}\mathcal{L}^n\left(\left\{y \in \R^n: |y_i| \leq 1/2 \textnormal{ for }i \leq m, |y_i| \leq r / n \textnormal{ for } i > m\right\}\right),
\end{align*}
where $\sigma^{n-1}$ denotes the normalised surface measure on $S^{n-1}$, \ $\alpha(n)$ is the volume of the unit ball in $\R^n$ and $\mathcal{L}^n$ is $n$-dimensional Lebesgue measure.
\end{proof}
\end{lemma}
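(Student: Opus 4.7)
The right-hand inequality is precisely \cite[Lemma 3.11]{mat:book}, so only the lower bound requires work; moreover it is trivial when $|x|\le r$, since then $|\pi_V x|\le |x|\le r$ for every $V$ and both sides equal $1$. Assume therefore $|x|>r$, so that $\phi_r^m(x)=(r/|x|)^m$.

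The plan is to reduce the Grassmannian integral to a spherical measure and then the spherical measure to an $n$-dimensional Lebesgue volume. Since $\gamma_{n,m}$ and the normalised surface measure $\sigma^{n-1}$ are the unique $O(n)$-invariant probability measures on $G(n,m)$ and on $S^{n-1}$ respectively, a change of variable by rotational invariance gives, for any fixed $V_0\in G(n,m)$,
\[
\int 1_{[0,r]}(|\pi_V x|)\,d\gamma_{n,m}(V)=\sigma^{n-1}\bigl(\bigl\{y\in S^{n-1}:|\pi_{V_0}(|x|y)|\le r\bigr\}\bigr).
\]
Taking $V_0$ to be a coordinate subspace identifies the right-hand set with a coordinate-sum inequality on the sphere of the form written by the authors, namely a ``spherical tube'' of width $\sim r/|x|$ around a codimension-$m$ subsphere of $S^{n-1}$.

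The second step is to bound the spherical measure of this tube from below by a Euclidean box. I would invoke the standard cone identity $\sigma^{n-1}(A)=\alpha(n)^{-1}\mathcal{L}^n(C(A))$, where $C(A)=\{ty:y\in A,\,t\in[0,1]\}$ is the cone over $A$ to the origin, and then exhibit an explicit axis-aligned box inside $C(A)$ with $m$ short edges of length proportional to $r/|x|$ and $n-m$ long edges of constant length. The authors' displayed box is precisely such a choice. Its Lebesgue volume is of order $(r/|x|)^m$, and after the factor $\alpha(n)^{-1}$ this yields $\sigma^{n-1}(A)\ge c_{n,m}\phi_r^m(x)$ with $c_{n,m}$ depending only on $n$ and $m$.

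The only genuinely subtle point is the elementary geometric check that the chosen box actually sits inside the cone over the spherical tube; this is the reason for the conservative $1/2$ and $r/n$ factors in the side lengths, and it contributes only dimension-dependent constants, with no dependence on $x$ or $r$. Everything else is either invariance or a direct volume computation.
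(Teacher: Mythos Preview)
Your proposal is correct and follows exactly the paper's route: defer the upper bound to \cite[Lemma~3.11]{mat:book}, dispose of $|x|\le r$ trivially, convert the Grassmannian integral to a spherical measure via rotational invariance, and lower-bound that measure by the Lebesgue volume of an axis-aligned box inside the cone using $\sigma^{n-1}(A)=\alpha(n)^{-1}\mathcal L^n(C(A))$.

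One caveat worth noting: you describe the required box as having $m$ short edges (of order $r/|x|$) and $n-m$ long edges, and you say the authors' displayed box ``is precisely such a choice''. In fact the box displayed in the paper has the roles reversed---long edges for $i\le m$ and short edges for $i>m$---because Mattila's Lemma~3.11 is stated for $d(x,V)=|P_{V^{\perp}}x|$ rather than $|\pi_V x|$, so copying the formula literally carries an implicit $m\leftrightarrow n-m$ swap. Your own description of the box is the one that actually produces the $(r/|x|)^m$ volume needed here, so your argument stands; just be aware that the displayed indices in the paper should be read modulo that duality.
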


It is convenient to introduce further kernels $\widetilde{\phi}_{r, \theta}^s$ on $m$-dimensional subspaces, where $0<r<1, \theta\in (0,1]$ and $0<s\leq m$
\be\label{kerhat}
\widetilde{\phi}_{r, \theta}^s(x)=
\begin{cases}
1 & |x| < r\\ 
\big(\frac{r}{|x|}\big)^s & r \leq |x| \leq r^\theta\\
0 & r^\theta < |x|
\end{cases} 
\qquad (x\in V),
\ee
where $V\in G(n,m)$ is some $m$-dimensional subspace.
The motivation for this is that  whilst $\widetilde{\phi}_{r, \theta}^s$ is of the same form as  ${\phi}_{r, \theta}^{s,m}(x)$  in the key region $|x| \leq r^\theta$  integrating $\widetilde{\phi}_{r, \theta}^s(\pi_V x)$ over $V\in G(n,m)$ gives a kernel comparable to $\phi_{r, \theta}^{s, m}(x)$. 
For brevity, we write $\simeq$ to mean that the ratio of the two sides is bounded away from $0$ and infinity by constants that are uniform in $x, r$ and $\theta$.
\begin{lemma}\label{kernellem}
For all $m \in \{1, \dots, n-1\}$ and $0 \leq s < m$ there exist constants $a, b> 0$, depending only on $n,m$ and $s$, such that for all $x \in \R^n$, $\theta \in (0, 1]$ and $0< r < \frac{1}{2}$,
$$
a_{n,m}\int \widetilde{\phi}_{r, \theta}^s(\pi_V x) d\gamma_{n, m}(V) \leq\phi_{r, \theta}^{s, m}(x) \leq b_{n,m}\int \widetilde{\phi}_{r, \theta}^s(\pi_V x) d\gamma_{n, m}(V).
$$
\end{lemma}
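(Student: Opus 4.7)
The plan is to reduce the inequality to Lemma \ref{matext} by writing $\widetilde{\phi}_{r,\theta}^s$ as a non-negative superposition of indicator functions of balls. Viewing $\widetilde{\phi}_{r,\theta}^s(y)$ as a non-increasing function of $\rho:=|y|$, it is absolutely continuous on $(r,r^\theta)$ with derivative $-s r^s\rho^{-s-1}$ and has a jump of size $r^{s(1-\theta)}$ at $\rho=r^\theta$; integrating this out gives, for $0<s<m$ and any $y\in V$,
\[
\widetilde{\phi}_{r,\theta}^s(y) \;=\; \int_r^{r^\theta} s\,r^{s}\,\rho^{-s-1}\,\mathbf{1}_{[0,\rho)}(|y|)\,d\rho \;+\; r^{s(1-\theta)}\,\mathbf{1}_{[0,r^\theta)}(|y|).
\]
The degenerate case $s=0$ is handled separately: $\widetilde{\phi}_{r,\theta}^0 = \mathbf{1}_{[0,r^\theta)}(|\cdot|)$ and $\phi_{r,\theta}^{0,m}(x)=\phi_{r^\theta}^m(x)$, so the conclusion is immediate from Lemma \ref{matext} applied at radius $r^\theta$.

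Substituting $y=\pi_V x$, integrating in $V$, using Fubini and then Lemma \ref{matext} converts every ball-indicator integral into a quantity comparable to $\phi_\rho^m(x)=\min\{1,(\rho/|x|)^m\}$, yielding
\[
\int \widetilde{\phi}_{r,\theta}^s(\pi_V x)\,d\gamma_{n,m}(V)
\;\simeq\; \int_r^{r^\theta} s\,r^{s}\,\rho^{-s-1}\,\phi_\rho^m(x)\,d\rho \;+\; r^{s(1-\theta)}\,\phi_{r^\theta}^m(x),
\]
with implicit constants depending only on $n,m$. Both inequalities in the lemma then follow from comparing this explicit quantity with $\phi_{r,\theta}^{s,m}(x)$. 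I would carry out this comparison by evaluating the $\rho$-integral in the three regions corresponding to the piecewise definition of $\phi_{r,\theta}^{s,m}$: (a) $|x|<r$, in which $\phi_\rho^m(x)=1$ throughout and the integral collapses to $(1-r^{s(1-\theta)})+r^{s(1-\theta)}=1=\phi_{r,\theta}^{s,m}(x)$; (b) $r\leq |x|\leq r^\theta$, where one splits at $\rho=|x|$ and obtains after simplification
$
\frac{1}{m-s}\bigl[m(r/|x|)^s - s(r/|x|)^m\bigr];
$
(c) $|x|>r^\theta$, where $\phi_\rho^m(x)=(\rho/|x|)^m$ throughout, producing
$
\frac{1}{(m-s)|x|^m}\bigl[m\,r^{s+\theta(m-s)} - s\,r^m\bigr].
$

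In each case, one factors out the target value $\phi_{r,\theta}^{s,m}(x)$ and checks that the remaining factor lies in $[1,\,m/(m-s)]$; for instance in (b) this factor is $1+\tfrac{s}{m-s}(1-(r/|x|)^{m-s})$, and in (c) it is $\tfrac{m-s r^{(1-\theta)(m-s)}}{m-s}$, both clearly bounded above and below by positive constants depending only on $m,s$. The main obstacle is really only bookkeeping: one must track the cancellation between the absolutely-continuous piece of $\widetilde{\phi}$ and the boundary atom at $\rho=r^\theta$, and verify that the hypothesis $s<m$ (which makes $\int \rho^{m-s-1}d\rho$ elementary and keeps $m/(m-s)$ finite) delivers constants independent of $r\in(0,\tfrac12)$ and $\theta\in(0,1]$, which is exactly the uniformity claimed.
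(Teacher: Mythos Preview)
Your proposal is correct and follows essentially the same route as the paper: the paper also writes $\widetilde{\phi}_{r,\theta}^s$ as $sr^s\int_r^{r^\theta}1_{[0,u]}(|x|)u^{-(s+1)}\,du + r^{s(1-\theta)}1_{[0,r^\theta]}(|x|)$, applies Fubini and Lemma~\ref{matext}, and then evaluates the resulting $\rho$-integral in the same three regimes. Your version is slightly more explicit in the final comparison (factoring out $\phi_{r,\theta}^{s,m}(x)$ and bounding the residual factor in $[1,m/(m-s)]$), whereas the paper simply records the three expressions and asserts $\simeq\phi_{r,\theta}^{s,m}(x)$; your separate treatment of $s=0$ is a harmless addition.
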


\begin{proof}
By direct integration, considering the cases $|x|\leq r,\ r <|x|\leq r^\theta$ and $r^\theta <|x|$ separately,
$$
\widetilde{\phi}_{r, \theta}^s(x) = sr^s\int\limits_{u = r}^{r^\theta} 1_{[0, u]}(|x|)u^{-(s+1)}du\ + \ r^{s(1-\theta)}1_{[0, r^\theta]}(|x|),
$$
and so using Fubini's theorem
\begin{align*}
\int \widetilde{\phi}_{r, \theta}^s(\pi_Vx) d\gamma_{n, m}(V) &= \int \bigg[sr^s\int\limits_{u = r}^{r^\theta} 1_{[0, u]}(|\pi_V x|)u^{-(s+1)}du \\
&\quad + r^{s(1-\theta)}1_{[0, r^\theta]}(|\pi_V x|)\bigg]d\gamma_{n, m}(V) \\
&= sr^s \int\limits_{u = r}^{r^\theta} u^{-(s+1)} \left[\int 1_{[0, u]}(|\pi_V x|)d\gamma_{n, m}(V)\right]\,du\\
&\quad + r^{s(1-\theta)}\int 1_{[0, r^\theta]}(|\pi_V x|)d\gamma_{n, m}(V).
\end{align*}

Using Lemma \ref{matext} and computing the integral using \eqref{kerbd} yields
\begin{align*}
&\int \widetilde{\phi}_{r, \theta}^s(\pi_Vx) d\gamma_{n, m}(V) 
  \simeq  sr^s \int\limits_{u = r}^{r^\theta} u^{-(s+1)}\phi_u^m(x)\,du + r^{s(1-\theta)}\phi_{r^\theta}^m(x)\\
 &= 
 \begin{cases}
      1 & (|x| < r) \\
       \frac{s}{m - s}\left(\left(\frac{r}{|x|}\right)^s - \left(\frac{r}{|x|}\right)^{m}\right) + \left(\frac{r}{|x|}\right)^s  &( r\leq |x| \leq r^\theta) \\
      \frac{s}{m-s} |x|^{-m}(r^{\theta(m-s)+s}- r^m)+|x|^{-m} r^{\theta(m-s) + s} & (r^\theta < |x|) 
   \end{cases}
  \\
&\simeq \phi_{r, \theta}^{s, m}(x)
\end{align*}
by comparing with \eqref{ker}, where the implied constants are uniform for $x\in \R^n$, $\theta\in(0,1]$ and $0<r<\frac{1}{2}$.
\end{proof}

Note that Lemma \ref{kernellem} is not quite valid when $s=m$ since a logarithmic term appears in the final integral. However we can avoid this  case in our application.

We require one further lemma which is a variant of Lemma \ref{capacitylb} for the modified kernels $\widetilde{\phi}_{r, \theta}^{s}$.
\begin{lemma}\label{kerswap}
Let $E\subset \R^n$ be compact, $\theta \in (0, 1]$, $0<r<1$ and $0 \leq s \leq n$. If there exists $\mu \in \mathcal{M}(E)$  and a Borel set $F \subset E$ such that
$$
\int \widetilde{\phi}_{r, \theta}^{s}(x - y) d\mu(y) \leq {\gamma}
$$
for  all $x \in F$, then
$$\mu(F) r^{s}{\gamma}^{-1}\leq S_{r, \theta}^s(E),  $$
where $S_{r, \theta}^s(E)$ is given by \eqref{sums}.
\begin{proof}
As in  Lemma \ref{capacitylb},
\begin{equation*}
{\gamma} \geq \int \widetilde{\phi}_{r, \theta}^{s}(x - y) d\mu(y) 
\geq \left(\frac{r}{\delta}\right)^s \mu(B(x, \delta))
\end{equation*}
for all $x \in F$ and $r \leq \delta \leq r^\theta$. Let $\{U_i\}_{i }$ be a cover of $F$ by sets with $r \leq |U_i| \leq r^\theta$. We may assume that for each $i$ there is some $x_i \in F\cap U_i$, so that $U_i \subset B(x_i, |U_i|)$. Hence
$$
 \mu(F) \leq \sum\limits_{i} \mu(U_i)\leq  \sum\limits_{i} \mu(B(x_i, |U_i|))\leq r^{-s}{\gamma} \sum\limits_{i}|U_i|^s,
$$
so taking infima over all such covers,
$$
S_{r, \theta}^{s}(E) \geq S_{r, \theta}^{s}(F) \geq  \mu(F) r^{s}{\gamma}^{-1}.
$$
\end{proof}
\end{lemma}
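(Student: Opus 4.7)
The plan is to mirror the argument of Lemma \ref{capacitylb}, taking advantage of the fact that the kernel $\widetilde{\phi}_{r, \theta}^{s}$ coincides with $\phi_{r, \theta}^{s, n}$ precisely on the diameter range $[r, r^\theta]$ of interest. The upper bound on the potential plays the role that the equilibrium identity played in the earlier lemma: it forces the measure $\mu$ to be uniformly thin on balls of radii between $r$ and $r^\theta$ centred at points of $F$, and this thinness is exactly what one needs to convert a cover into a sum bound.

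First I would observe that for any $x \in \R^n$ and $r \leq \delta \leq r^\theta$, the definition \eqref{kerhat} gives the pointwise lower bound
$$
\widetilde{\phi}_{r, \theta}^{s}(x - y) \ \geq\ \left(\frac{r}{\delta}\right)^{s} \mathbf{1}_{B(x, \delta)}(y),
$$
since for $|x-y| < r$ the kernel equals $1 \geq (r/\delta)^s$, and for $r \leq |x-y| \leq \delta$ it equals $(r/|x-y|)^s \geq (r/\delta)^s$. Integrating against $\mu$ and applying the hypothesis yields
$$
\mu(B(x,\delta)) \ \leq\ \gamma\left(\frac{\delta}{r}\right)^{s}
$$
for every $x \in F$ and every $\delta \in [r, r^\theta]$.

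Next, given any cover $\{U_i\}$ of $F$ with $r \leq |U_i| \leq r^\theta$, I would restrict to the indices for which $U_i \cap F \neq \emptyset$ and, for each such $i$, select $x_i \in U_i \cap F$, so that $U_i \subset B(x_i, |U_i|)$. Summing the ball estimate above with $\delta = |U_i|$ gives
$$
\mu(F)\ \leq\ \sum_i \mu(U_i)\ \leq\ \sum_i \mu\bigl(B(x_i, |U_i|)\bigr)\ \leq\ \gamma\, r^{-s} \sum_i |U_i|^{s},
$$
so $\sum_i |U_i|^s \geq \mu(F)\, r^{s} \gamma^{-1}$. Taking the infimum over admissible covers of $F$ produces $S_{r, \theta}^{s}(F) \geq \mu(F)\, r^{s} \gamma^{-1}$, and since any admissible cover of $E$ is automatically an admissible cover of $F \subset E$, we have $S_{r, \theta}^{s}(E) \geq S_{r, \theta}^{s}(F)$, completing the inequality.

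There is no real obstacle here beyond bookkeeping; the only subtlety is that the argument relies on selecting the centres $x_i$ from $F$ itself (not merely from $E$), which is why the hypothesis is stated pointwise on $F$ and why the cover is shrunk to those $U_i$ meeting $F$. A minor point to note is that the kernel $\widetilde{\phi}_{r, \theta}^{s}$ vanishes for $|x| > r^\theta$, but this plays no role in the above estimate because we only use the lower bound for $\delta \leq r^\theta$.
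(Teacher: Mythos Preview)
Your proof is correct and follows essentially the same argument as the paper's own proof: derive the ball-measure bound $\mu(B(x,\delta))\leq\gamma(\delta/r)^s$ from the pointwise kernel lower bound, restrict a cover of $F$ to sets meeting $F$, enclose each in a ball centred in $F$, sum, and use $S_{r,\theta}^s(E)\geq S_{r,\theta}^s(F)$. Your write-up is slightly more explicit about justifying the pointwise inequality $\widetilde{\phi}_{r,\theta}^{s}(x-y)\geq (r/\delta)^s\mathbf{1}_{B(x,\delta)}(y)$, but the route is the same.
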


\noindent{\bf Proof of Theorem \ref{main}}
To prove Theorem \ref{main} it suffices to prove the \emph{a priori} weaker result where we first fix $\theta \in (0,1]$ and then establish the result for almost all $V$.  We can do this because the intermediate dimensions are continuous in   $\theta \in (0,1]$ and are therefore determined by their values on the rationals. 

Without loss of generality let $E \subset \R^n$ be compact and $m \in \{1, \dots, n-1\}$. When $\theta =1$ Theorem 5.1 reduces to the projection properties for box-counting dimensions, see \cite{fal:2019}. Hence we will assume that $\theta \in (0,1)$.

To obtain the upper bounds \eqref{mains} we will apply Lemma \ref{capacityub} to projections $\pi_V E$ of $E$ onto  
$V \in G(n, m)$. 
It is clear from the definition of  $\phi_{r, \theta}^{s, m}$  \eqref{ker} that, for all $0 \leq s \leq m,\  \theta \in (0,1)$ and $0<r<1$,  $\phi_{r, \theta}^{s, m}(x)$ is monotonically decreasing in $|x|$. Since orthogonal projection is contracting, that is  $|\pi_V(x) - \pi_V(y)| \leq |x-y|$, it follows  that 
$$\phi_{r, \theta}^{s, m}(\pi_V(x) - \pi_V(y))\geq \phi_{r, \theta}^{s, m}(x- y)  \qquad (x,y \in  \R^n).$$

By Lemma \ref{attaincap}, for each $0 \leq s \leq m$ there exists a measure $\mu \in \mathcal{M}(E)$ such that for all $x \in E$
\begin{align*}
\frac{1}{C_{r, \theta}^{s, m}(E)} &\leq \int \phi_{r, \theta}^{s, m}(x- y) d\mu(y) \\&\leq \int \phi_{r, \theta}^{s, m}(\pi_V(x) - \pi_V(y)) d\mu(y)\\
&\leq \int \phi_{r, \theta}^{s, m}(\pi_V(x) - w) d\mu_V(w),
\end{align*}
where $\mu_V \in \mathcal{M}(\pi_VE)$ denotes the image of $\mu$ under $\pi_V$ defined by $\int g(w)d\mu_V(w) = \int g(\pi_Vx)d\mu(x)$ for continuous $g$ and by extension. Then, for each $z = \pi_V(x) \in \pi_V E$, 
\begin{equation*}
\int \phi_{r, \theta}^{s, m}(z - w) d\mu_V(w) \geq \frac{1}{C_{r, \theta}^{s, m}(E)}.
\end{equation*}
Thus $\pi_V E\subset V$ supports a measure $\mu_V$ satisfying the condition of Lemma \ref{capacityub} (with $n$ replaced by $m$ and $V$ identified with $\R^m$ in the natural  way). Hence
$$ S_{r,\theta}^s(\pi_V E) \leq a_n\ceil{\log_2(|E|/r) + 1}r^{s}C_{r, \theta}^{s, m}(E)$$
for all $0 \leq s \leq m$ for sufficiently small $r$. Thus
$$
 \liminf\limits_{r \rightarrow 0} \frac{S_{r,\theta}^{s}(\pi_V E)}{-\log r} \leq -s + \liminf\limits_{r \rightarrow 0} \frac{C_{r,\theta}^{s, m}(E)}{-\log r},
$$
and the definitions  \eqref{altlid} and \eqref{lidp} imply that $\lid \pi_V E \leq \lid^{m} E $. The inequality for upper intermediate dimensions follows in the same way on taking upper limits.

To show that the opposite inequalities hold for almost all $V$ let $\theta \in (0,1)$ and $0 \leq s < m$. Let $(r_k)_{k \in \N}$ be a sequence tending to 0 such that $0 < r_k \leq 2^{-k}$ and
\be\label{supest}
\uplim\limits_{k \rightarrow \infty} \frac{\log C_{r_k, \theta}^{s, m}(E)}{-\log r_k} = \uplim\limits_{r \rightarrow 0} \frac{\log C_{r, \theta}^{s, m}(E)}{-\log r}.
\ee
Using Lemma \ref{attaincap}, for each  $k \in \N$, let  $\mu^k$ be an equilibrium measure on $E$ for the kernel  $\phi_{r_k, \theta}^{s, m}$   and let 
$$
\gamma_k := \frac{1}{C_{r_k, \theta}^{s, m}(E)} = \int \int \phi_{r_k, \theta}^{s, m}(x- y)d\mu^k(x)d\mu^k(y).
$$
With $\widetilde{\phi}_{r, \theta}^{s}$ as in \eqref{kerhat},  Lemma \ref{kernellem} gives 
\begin{align*}
&\int \int \int \widetilde{\phi}_{r_k, \theta}^{s}(\pi_Vx- \pi_Vy)d\gamma_{n, m}(V)d\mu^k(x)d\mu^k(y) \leq a^{-1}{\gamma_k}.
\end{align*}
Then for each  $\epsilon > 0$, 
\begin{equation*}
\int \int \int \gamma_k^{-1}r_k^{\epsilon}\widetilde{\phi}_{r_k, \theta}^{s}(\pi_Vx- \pi_Vy)d\gamma_{n, m}(V)d\mu^k(x)d\mu^k(y) \leq a^{-1}r_k^{\epsilon},
\end{equation*}
so summing and using Fubini's Theorem,
\begin{align*}
\int \sum\limits_{k=1}^{\infty}\left(\int \int \gamma_k^{-1}r_k^{\epsilon}\widetilde{\phi}_{r_k, \theta}^{s}(\pi_Vx- \pi_Vy)d\mu^k(x)d\mu^k(y)\right)d\gamma_{n, m}(V)\ \\ \leq\ a^{-1}\sum\limits_{k=1}^{\infty}r_k^{\epsilon} < \infty
\end{align*}
since $r_k^\epsilon \leq 2^{-k\epsilon}$. Hence, for $\gamma_{n,m}$-almost all $V$, there exists $M_V>0$ such that
\begin{align*}
\int \int \gamma_k^{-1}r_k^{\epsilon}\widetilde{\phi}_{r_k, \theta}^{s}(t- u)d\mu^k_V(t)d\mu^k_V(u) \leq M_V < \infty
\end{align*}
for all $k$, where $\mu_V^k \in \mathcal{M}(\pi_VE)$ is the image of the measure $\mu^k$ under $\pi_V$. Hence for such $V$,
\begin{align*}
\int \int \widetilde{\phi}_{r_k, \theta}^{s}(t- u)d\mu^k_V(t)d\mu^k_V(u) \leq M_V \gamma_k r_k^{-\epsilon}
\end{align*}
for all $k$. Thus, for each $k$ there exists a  set $F_k \subset  \pi_V E$ such that $\mu_V^k(F_k) \geq \frac{1}{2}$ and 
$$
\int \widetilde{\phi}_{r_k, \theta}^{s}(t- u)d\mu_V^k(t) \leq 2M_V \gamma_k r_k^{-\epsilon}
$$
for all $u \in F_k$. It  follows from Lemma \ref{kerswap} that
\begin{align*}
S_{r_k, \theta}^s(\pi_V E) \geq {\textstyle\frac{1}{2}} (2M_V\gamma_k)^{-1}r_k^{s+ \epsilon},
\end{align*}
and so
\begin{align*}
\uplim\limits_{k \rightarrow \infty}\frac{\log S_{r_k, \theta}^{s}(\pi_V E)}{-\log r_k} &\geq \uplim\limits_{k \rightarrow \infty}\frac{\log r_k^{s + \epsilon}(4M_V\gamma_k)^{-1}}{-\log r_k}\\
&= \uplim\limits_{k \rightarrow \infty}\frac{\log r_k^{s + \epsilon}C_{r_k, \theta}^{s, m}(E)}{-\log r_k}\\
&= -(s + \epsilon) + \uplim\limits_{k \rightarrow \infty} \frac{\log C_{r_k, \theta}^{s, m}(E)}{-\log r_k}.
\end{align*}
This is true for all $\epsilon >0$, so using \eqref{supest},
$$
\uplim\limits_{r\to 0}\frac{\log S_{r, \theta}^{s}(\pi_V E)}{-\log r} 
\geq
-s  + \uplim\limits_{r\to 0} \frac{\log C_{r, \theta}^{s, m}(E)}{-\log r}
$$
 for all $s \in [0,m)$. Since the expressions on both sides of this inequality are continuous for $s \in [0,m]$  by Lemmas \ref{declem} and \ref{caplem}, the inequality is valid for $s \in [0,m]$. The definitions \eqref{altuid} and \eqref{ludp} now imply that $\uid \pi_V E \geq \uid^{m} E $ for almost all $V$.

The argument for lower intermediate dimensions \eqref{mainas} is similar,  setting $r_k = 2^{-k}$ and noting that the limits may be taken  through a geometric sequence of $r$ tending to 0. 
\hfill $\square$

\section{Observations and applications}

One of the most natural questions concerning the intermediate dimensions is that of continuity at $\theta=0$.  In particular, continuity at 0 provides a  complete continuous interpolation between the Hausdorff and box-counting dimensions and it is therefore of interest to establish this for various classes of set.  For example, this was demonstrated in  \cite[Proposition 4.1]{fafrke:2018}  for Bedford-McMullen self-affine carpets, despite the absence of a precise formula for the intermediate dimensions.  It turns out that continuity at 0 for the intermediate dimensions of a set implies continuity at 0 for the intermediate dimensions of the projections almost surely.

\begin{cor} \label{cor1}
Let $E \subset \mathbb{R}^n$ be a bounded set such that $\lid E$ is continuous at $\theta=0$.  If $V \in G(n, m)$ is such that $\hd \pi_V E = \min\{m, \hd E\},$ then $\lid \pi_V E $ is continuous at  $\theta=0$.  In particular, $\lid \pi_V E$ is continuous at $\theta =0$ for    $\gamma_{n, m}$-almost all $V \in G(n, m)$. A similar result holds for the upper intermediate dimensions.
\end{cor}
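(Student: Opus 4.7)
The plan is a direct sandwich argument. I will trap $\lid \pi_V E$ between $\hd \pi_V E$ from below and a quantity that shrinks to $\hd \pi_V E$ as $\theta \to 0^+$, invoking the hypothesis on $V$ only at the last step to close the gap.

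The lower bound $\lid \pi_V E \geq \hd \pi_V E$ is free for every $\theta \in (0,1]$: any cover of $\pi_V E$ satisfying the extra diameter constraint $r^{1/\theta} \leq |U_i| \leq r$ in \eqref{lid} is in particular a valid cover in the definition of Hausdorff dimension, so its $s$-sum dominates the corresponding Hausdorff $s$-sum. For the upper bound I chain three results already proved. Theorem \ref{main} gives $\lid \pi_V E \leq \lid^m E$ for every $V \in G(n,m)$; Lemma \ref{monoinm} combined with Theorem \ref{equivdim} gives $\lid^m E \leq \lid^n E = \lid E$; and the definition \eqref{lidp} of the profile enforces $\lid^m E \leq m$. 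Hence
$$
\hd \pi_V E \;\leq\; \lid \pi_V E \;\leq\; \min\{m,\, \lid E\} \qquad (\theta \in (0,1]).
$$

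As $\theta \to 0^+$, the hypothesised continuity of $\lid E$ at $0$ gives $\lid E \to \hd E$, so the right-hand side converges to $\min\{m, \hd E\}$, which by the hypothesis on $V$ equals $\hd \pi_V E$. A squeeze yields $\lim_{\theta \to 0^+} \lid \pi_V E = \hd \pi_V E$, i.e.\ continuity at $0$. The `in particular' clause then follows because Marstrand's theorem (in Mattila's form) guarantees $\hd \pi_V E = \min\{m, \hd E\}$ for $\gamma_{n,m}$-almost every $V \in G(n,m)$. The upper-intermediate case is an identical argument with $\lid$ replaced by $\uid$ throughout, using the second halves of Theorem \ref{main} and Lemma \ref{monoinm}.

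There is no real conceptual obstacle; the proof is essentially bookkeeping. The only slightly delicate point to highlight is that the hypothesis on $V$ is doing \emph{exactly} the work needed to make the truncation $\min\{m,\,\cdot\,\}$ compatible with the limit — and conversely that this hypothesis is sharp, since without it the right-hand upper bound would tend to a value strictly larger than $\hd \pi_V E$ and continuity of $\lid \pi_V E$ at $0$ could genuinely fail.
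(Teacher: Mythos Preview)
Your proof is correct and follows essentially the same sandwich argument as the paper: the paper writes the chain $\hd E \leq \hd \pi_V E \leq \lid \pi_V E \leq \lid^m E \leq \lid^n E = \lid E \to \hd E$ after first disposing of the trivial case $m \leq \hd E$, whereas you carry the $\min\{m,\cdot\}$ through explicitly and thereby avoid the case split. The ingredients invoked (Theorem~\ref{main}, Lemma~\ref{monoinm}, Theorem~\ref{equivdim}, and Marstrand--Mattila for the almost-sure clause) are identical.
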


\begin{proof}
If $m\leq  \hd E$, then the result is immediate and so we may assume that $m>  \hd E$.  Then, for $\theta \in (0,1)$, using \eqref{mains}, Lemma \ref{monoinm},  Theorem \ref{equivdim}, and the assumption that $\lid E$ is continuous at $\theta=0$, we get
\[
\hd E \leq \hd \pi_V E \leq  \lid \pi_V E \leq   \lid^m E \leq   \lid^n  E =  \lid   E \to \hd E
\]
as $\theta \to 0$, which proves continuity of  $\lid \pi_V E $ at $\theta =0$.   The final part of the result, concerning almost sure continuity at 0,  follows from the above result together with the Marstrand-Mattila projection theorems for Hausdorff dimensions.
\end{proof}

\begin{cor} \label{cor2}
Let $E \subset \mathbb{R}^2$ be a Bedford-McMullen carpet associated with a regular $a \times b$ grid for integers $b>a\geq 2$.  Then $\lid \pi_V E$ and $\uid \pi_V E$  are continuous at $\theta=0$ for $\gamma_{2, 1}$-almost all $V \in G(2, 1)$. In particular, if $\log a /\log b \notin \mathbb{Q}$, then $\lid \pi_V E$ and $\uid \pi_V E$  are continuous at $\theta=0$ for all $V \in G(2, 1)$. 
\end{cor}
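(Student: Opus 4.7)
\medskip
\noindent\textbf{Proof plan for Corollary \ref{cor2}.}
The plan is to reduce everything to Corollary \ref{cor1}. Its hypothesis that $\lid E$ and $\uid E$ are continuous at $\theta = 0$ is known to hold for every Bedford--McMullen carpet by \cite[Proposition 4.1]{fafrke:2018}, so the only thing left to check in order to deduce continuity of the intermediate dimensions for $\pi_V E$ is the projection identity $\hd \pi_V E = \min\{1, \hd E\}$. For the almost-sure statement, this identity is immediate from the classical Marstrand--Mattila projection theorem applied to $E \subset \R^2$, so Corollary \ref{cor1} yields continuity of $\lid \pi_V E$ and $\uid \pi_V E$ at $\theta = 0$ for $\gamma_{2,1}$-almost every $V \in G(2,1)$.

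For the stronger ``all $V$'' assertion under $\log a/\log b \notin \Q$, I would split the argument according to the direction of $V$. If $V$ is one of the two coordinate axes, then $\pi_V E$ is the attractor of a self-similar IFS of the form $\{x \mapsto (x+j)/b\}_{j \in J}$ (or with denominator $a$), where $J$ indexes those columns of the defining grid that contain at least one chosen rectangle; this IFS manifestly satisfies the open set condition on $(0,1)$. Hence $\hd \pi_V E = \lbd \pi_V E = \ubd \pi_V E$, and since the intermediate dimensions are sandwiched between Hausdorff and box-counting dimension, all of $\lid \pi_V E$ and $\uid \pi_V E$ coincide with this common value for every $\theta \in [0,1]$, so continuity at $\theta = 0$ is automatic without needing Corollary \ref{cor1}. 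For any non-coordinate $V$, I would invoke the projection theorem for Bedford--McMullen carpets with irrational $\log a/\log b$, which asserts $\hd \pi_V E = \min\{1, \hd E\}$ for all such $V$, and then apply Corollary \ref{cor1} once more.

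The main (mild) obstacle is that the projection theorem for Bedford--McMullen carpets in the irrational regime is typically stated only for non-coordinate lines, so the two coordinate axes need a separate, hands-on treatment; the key observation is that coordinate projections of these carpets are genuinely self-similar rather than merely self-affine, which collapses that case to the trivial dimension identity above. Everything else is a direct consequence of results already established in the paper.
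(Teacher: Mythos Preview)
Your proposal is correct and follows essentially the same route as the paper: apply Corollary~\ref{cor1} together with \cite[Proposition 4.1]{fafrke:2018} for the almost-sure statement, and for the irrational case split into non-coordinate directions (handled via the projection theorem of Ferguson--Jordan--Shmerkin \cite[Theorem 1.1]{fjs}, which is the result you allude to) and coordinate directions (handled directly since the coordinate projections are self-similar). The only cosmetic difference is that the paper phrases the coordinate-axis case simply as ``self-similar, hence intermediate dimensions continuous at $0$'' without invoking the open set condition explicitly, but your more detailed justification via $\hd = \bd$ is equally valid.
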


\begin{proof}
The almost sure result  follows immediately from Corollary \ref{cor1} and   \cite[Proposition 4.1]{fafrke:2018}.  The upgrade from almost all to all follows by applying  \cite[Theorem 1.1]{fjs}, which proved there are \emph{no} exceptions to Marstrand's projection theorem for Bedford-McMullen carpets of `irrational type' apart from possibly the projections onto the coordinate axes. However, the coordinate projections  are both  self-similar sets and therefore the intermediate dimensions are automatically continuous at $0$.  
\end{proof}

The converse implication in Corollary \ref{cor1} does not necessarily hold, since continuity at 0 for all of the projections of $E$ does not guarantee continuity at 0 for $E$.  For example, let $E$ be a set in the plane with $\hd E = 1$ that satisfies $\lid E = 2$ for all $\theta \in (0,1]$ and place it inside a circle. The existence of such an $E$ follows easily from the following consequence  of  \cite[Proposition 2.4]{fafrke:2018}.  Our capacity approach yields a simple proof, which we include for completeness.  
\begin{cor} \label{cor3}
If $E \subset \R^n$ is bounded and satisfies $\lbd E = n$, then  $
\lid E = \uid E =  n$ for all $\theta \in (0,1]$.  Similarly,  if $\ubd E = n$, then  $
 \uid E =  n$ for all $\theta \in (0,1]$.
\end{cor}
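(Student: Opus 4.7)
The plan is to exploit the observation that at the endpoint $s = m = n$ the kernel $\phi_{r,\theta}^{n,n}$ defined in \eqref{ker} collapses to $\min\{1,(r/|x|)^n\}$, independently of $\theta$; this is precisely the kernel $\phi_r^n$ of \eqref{kerbd} used in \cite{fal:2019} to treat box-counting dimensions. Hence $C_{r,\theta}^{n,n}(E) = C_r^n(E)$ for every $\theta \in (0,1]$, and the capacity-theoretic characterisation of box dimensions from \cite{fal:2019}, already invoked in the proof of Lemma \ref{caplem} (specialising $m = n$, so that the only element of $G(n,n)$ is $\R^n$ and $\pi_V$ is the identity), yields
\[
\liminf_{r \to 0} \frac{\log C_{r,\theta}^{n,n}(E)}{-\log r} = \lbd E, \qquad \limsup_{r \to 0} \frac{\log C_{r,\theta}^{n,n}(E)}{-\log r} = \ubd E.
\]

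Assume first that $\lbd E = n$. By Theorem \ref{equivdim}, $\lid E = \lid^n E$, where by \eqref{lidp} the value $\lid^n E$ is the \emph{unique} $s \in [0,n]$ satisfying $\liminf_{r \to 0} \log C_{r,\theta}^{s,n}(E)/(-\log r) = s$. The displayed identity shows that $s = n$ solves this equation (both sides equal $n$), so uniqueness forces $\lid^n E = n$ and hence $\lid E = n$. Since the intermediate dimensions are monotone increasing in $\theta$ with $\uid_1 E = \ubd E \leq n$, we obtain $n = \lid E \leq \uid E \leq \ubd E \leq n$, so $\uid E = n$ as well, for every $\theta \in (0,1]$.

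The second claim is the verbatim upper-limit analogue: if $\ubd E = n$, then $s = n$ satisfies the $\limsup$ equation defining $\uid^n E$ in \eqref{ludp}, so $\uid E = \uid^n E = n$ for all $\theta \in (0,1]$. There is no real obstacle here: once one notices the $\theta$-independence of $\phi_{r,\theta}^{s,m}$ at the endpoint $s = m = n$, Theorem \ref{equivdim} together with the uniqueness characterisation of $\lid^n$ and $\uid^n$ reduces the corollary to quoting the corresponding box-counting statement from \cite{fal:2019}.
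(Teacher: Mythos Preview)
Your proof is correct and follows essentially the same approach as the paper's own argument: both identify that $\liminf_{r\to 0}\log C_{r,\theta}^{n,n}(E)/(-\log r)=\lbd E$ (you justify this explicitly via the $\theta$-independence of the endpoint kernel and the $m=n$ case of the statement quoted in the proof of Lemma~\ref{caplem}, whereas the paper simply asserts it), then invoke the uniqueness in \eqref{lidp} together with Theorem~\ref{equivdim} to conclude $\lid E=\lid^n E=n$, and finish with the trivial chain $n=\lid E\leq\uid E\leq n$.
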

\begin{proof}
Observe that
$$
\lowlim\limits_{r \rightarrow 0}\frac{\log C_{r, \theta}^{n, n}(E)}{-\log r} =     \lbd E = n
$$
and so by \eqref{lidp} and Theorem \ref{equivdim} it follows $  \uid E \geq \lid  E =  \lid^n E = \lbd E = n$.  The result concerning $\uid E$ alone follows similarly.
\end{proof}

The following counter-intuitive result follows by piecing together Corollaries \ref{cor1} and \ref{cor3}.  This gives a concrete application of the intermediate dimensions to a question concerning only the box and Hausdorff dimensions.

\begin{cor}\label{cor4}
Let $E \subset \mathbb{R}^n$ be a bounded set such that $\lid E$ is continuous at $\theta=0$.  Then
\[
\lbd \pi_V E = m
\]
for $\gamma_{n, m}$-almost all $V \in G(n, m)$ if and only if
\[
\hd E \geq m.
\]
A similar result holds for upper dimensions replacing $\lid E$ and $\lbd E$ with $\uid E$ and $\ubd E$, respectively.
\end{cor}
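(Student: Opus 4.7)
The plan is to prove the two implications separately, with the non-trivial direction being the ``only if'' part, which is the whole reason Corollaries \ref{cor1} and \ref{cor3} were set up in the previous discussion. Throughout I will only treat the lower version explicitly; the upper version follows by the identical argument with $\lid$ replaced by $\uid$ and $\lbd$ by $\ubd$.

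For the ``if'' direction, assume $\hd E \geq m$. The classical Marstrand--Mattila projection theorem for Hausdorff dimension gives $\hd \pi_V E = \min\{m,\hd E\} = m$ for $\gamma_{n,m}$-almost all $V \in G(n,m)$. Since $\pi_V E \subset V$ is contained in an $m$-dimensional subspace, we have $\lbd \pi_V E \leq m$ trivially, and $\lbd \pi_V E \geq \hd \pi_V E = m$ for a.e.\ $V$. Hence equality $\lbd \pi_V E = m$ holds almost surely.

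For the ``only if'' direction, I argue by contrapositive: I will show that if $\hd E < m$, then $\lbd \pi_V E < m$ for $\gamma_{n,m}$-almost all $V$. Assume $\hd E < m$. Then Marstrand--Mattila gives $\hd \pi_V E = \hd E < m$ for a.e.\ $V$, and for each such $V$ Corollary \ref{cor1} (whose hypothesis is exactly what is assumed on $E$) implies that $\lid \pi_V E$ is continuous at $\theta = 0$ with limiting value $\hd \pi_V E < m$. Therefore, for each such $V$ we may choose $\theta = \theta_V \in (0,1]$ (depending on $V$) for which $\lid \pi_V E < m$.

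Now apply the contrapositive of Corollary \ref{cor3} inside the $m$-dimensional subspace $V$: if $\lbd \pi_V E$ were equal to $m$, then by Corollary \ref{cor3} (with $n$ replaced by $m$, and $E$ replaced by $\pi_V E \subset V \cong \R^m$) we would have $\lid \pi_V E = m$ for every $\theta \in (0,1]$, contradicting the choice of $\theta_V$. Hence $\lbd \pi_V E < m$ for $\gamma_{n,m}$-almost all $V$, completing the proof of the lower version. The upper version is identical, using the upper halves of Corollaries \ref{cor1} and \ref{cor3}. The only subtlety worth flagging is the per-$V$ nature of the continuity argument: the threshold $\theta_V$ need not be uniform in $V$, but this is irrelevant because the conclusion $\lbd \pi_V E < m$ is extracted one $V$ at a time.
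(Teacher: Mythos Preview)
Your proof is correct and follows essentially the same approach as the paper's, invoking Marstrand--Mattila together with Corollaries~\ref{cor1} and~\ref{cor3} in the same way; the only cosmetic difference is that you frame the non-trivial direction as a contrapositive whereas the paper phrases it as a proof by contradiction.
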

\begin{proof}
One direction is trivial, and holds without the continuity assumption, since, if $\hd E \geq m$, then 
\[
m \geq \lbd \pi_V E  \geq \hd \pi_V E  \geq m
\]
for $\gamma_{n, m}$-almost all $V \in G(n, m)$.  The other direction is where the interest lies.  Indeed, suppose  $\lbd \pi_V E = m$ for $\gamma_{n, m}$-almost all $V \in G(n, m)$ but 
$\hd E < m$.  Then Corollary \ref{cor3} implies that $\lid  \pi_V E = m$ for $\gamma_{n, m}$-almost all $V \in G(n, m)$ and all $\theta \in (0,1]$.  Applying the Marstrand-Mattila projection theorem for Hausdorff dimension, it follows that for  $\gamma_{n, m}$-almost all $V \in G(n, m)$    $\lid  \pi_V E $ is not continuous at $\theta = 0$, which contradicts Corollary \ref{cor1}.
\end{proof}

To motivate Corollary \ref{cor4} we give a couple of simple applications.  If $E \subset \mathbb{R}^2$ is a Bedford-McMullen carpet satisfying $\hd E < 1 \leq \bd E$, then
\[
\ubd \pi_V E<1 = \min \{ \bd E , 1\}
\]
for    $\gamma_{2, 1}$-almost all $V \in G(2, 1)$.  This surprising application seems difficult to derive directly, noting that there is very little known about the \emph{box} dimensions of projections of Bedford-McMullen carpets, aside from them being almost surely constant.  Another, more accessible, example is provided by the sequence sets $F_p = \{ n^{-p} : n \geq 1\}$ for fixed $p >0$.  It is well-known that $\bd F_p = 1/(1+p)$ and therefore
\[
\bd (F_p \times F_p) = 2/(1+p)
\]
which is at least 1 for $p \leq 1$ and approaches 2 as $p$ approaches 0.   Continuity at $\theta = 0$ for $\uid F_p$ was established in \cite[Proposition 3.1]{fafrke:2018} and it is straightforward  to extend this to  $\uid(F_p \times F_p)$.  Therefore, since $\hd(F_p \times F_p)=0<1$, we get
\[
\ubd \pi_V (F_p \times F_p) < 1
\]
for  $\gamma_{2, 1}$-almost all $V \in G(2, 1)$.  This is most striking when $p$ is very close to 0.  A direct calculation, which we omit, in fact reveals that for all $V \in G(2, 1)$ apart from the horizontal and vertical projections
\[
\ubd \pi_V( F_p \times F_p)  =  1- \left(\frac{p}{p+1}\right)^2,
\]
an entertaining formula which we would not  have come across if Corollary \ref{cor4} had not lead us to it, see also \cite[Proposition 5.1]{frahoyu:2018}.

\section*{Acknowledgement}
SAB thanks the Carnegie Trust for financially supporting this work.  JMF and KJF are grateful for the financial support of \emph{EPSRC Standard Grant} (EP/R015104/1).

\end{document}